\documentclass[11pt,a4paper]{amsart}
%
\usepackage{graphicx}
\usepackage{amssymb,latexsym,amsmath}
\usepackage{amsfonts,amsbsy,bm}
\usepackage{amscd}
%
%
%
\newtheorem{theorem}[subsection]{Theorem}
\newtheorem{proposition}[subsection]{Proposition}

\theoremstyle{definition}
\newtheorem{definition}[subsection]{Definition}
\newtheorem{remark}[subsection]{Remark}

\newcommand{\Ba}[1]{\begin{array}{#1}}
\newcommand{\Ea}{\end{array}}
\newcommand{\Be}{\begin{equation}}
\newcommand{\Ee}{\end{equation}}
\newcommand{\Bea}{\begin{eqnarray}}
\newcommand{\Eea}{\end{eqnarray}}
\newcommand{\Beas}{\begin{eqnarray*}}
\newcommand{\Eeas}{\end{eqnarray*}}
\newcommand{\Benu}{\begin{enumerate}}
\newcommand{\Eenu}{\end{enumerate}}
\newcommand{\Bi}{\begin{itemize}}
\newcommand{\Ei}{\end{itemize}}
\renewcommand {\phi} {{\varphi}}


%
\begin{document}

\title[Martingale Hardy-Amalgam Spaces]{Martingale Hardy-Amalgam Spaces: Atomic decompositions and Duality
}


\author{Justice Sam Bansah and Beno\^it F. Sehba}
\address{Department of Mathematics, University of Ghana,\\ P. O. Box LG 62 Legon, Accra, Ghana}
\email{fjaccobian@gmail.com}
\address{Department of Mathematics, University of Ghana,\\ P. O. Box LG 62 Legon, Accra, Ghana}
\email{bfsehba@ug.edu.gh}


\begin{abstract}
In this paper, we introduce the notion of martingale Hardy-amalgam spaces: $ H^s_{p,q},\,\,\mathcal{Q}_{p,q}$ and $\mathcal{P}_{p,q}$. We present two atomic decompositions for these spaces. The dual space of $H^s_{p,q}$ for $0<p\le q\le 1$ is shown to be a Campanato-type space.

\end{abstract}

\keywords{ Martingales, Hardy-Amalgam spaces, Atomic decomposition,   Campanato spaces}
\subjclass[2010] {Primary: 60G42, 60G46 Secondary: 42B25, 42B35 }

\maketitle 

\section{Introduction}
We owe the martingale theory to J. L. Doob from his seminal work \cite{Doob}. The theory was later developed by D. L. Burkholder, A. M. Garsia, R. Cairoli, B. J. Davis and their collaborators (see \cite{Burkholder1,Burkholder2,Cairoli,Davis,Garsia,Neveu,Weisz} and the references therein). Martingales are particularly interesting because of their connection and applications in Fourier analysis, complex analysis and classical Hardy spaces (see for example \cite{Bass,Burkholder1,Doob,Durett,Long,Weisz}).
\vskip .1cm
The classical martingale Hardy spaces $H_p$ are defined as the spaces of martingales whose maximal function, quadratic variation or conditional quadratic variation belongs to the usual Lebesgue spaces $L_p$ with a probability measure. The atomic decompositions, martingale embeddings and dual spaces of these spaces and related spaces are discussed by F. Weisz in \cite{Weisz}. This type of studies has been considered by several authors for some generalizations of the classical Lebesgue spaces as Lorentz spaces, Orlicz spaces, Orlicz-Musielak spaces...(see \cite{Ho,Jiao,Miyamoto,Ren,Xie1,Xie,Yong}).
\vskip .1cm
In this paper, inspired by the recent introduction of Hardy-amalgam spaces in the classical Harmonic analysis (\cite{Justin1,Justin2,Zhang}), we replace Lebesgue spaces in the definition of the classical martingale Hardy spaces by the Wiener amalgam spaces, introducing then the notion of martingale Hardy- amalgam spaces. As Wiener amalgam spaces generalize Lebesgue spaces, martingale Hardy-amalgam spaces then generalize the martingale Hardy spaces presented in \cite{Weisz}. We provide atomic decompositions and we characterize the dual spaces of these martingale Hardy-amalgam spaces and their associated spaces of predictive martingales and martingales with predictive quadratic variation.
\vskip .1cm
{\bf Motivation}: We are motivated essentially by two observations. As fisrt observation we note that in the case of classical Hardy-amalgam spaces of \cite{Justin1}, atomic decomposition is obtained only for the range $0<p\le q\le 1$. The question that came into our mind was to know if any answer could be given beyond this range. The second observation is that dyadic analogues of Hardy spaces are pretty practical when comes to the study of operators on these spaces. It was then natural to consider dyadic analogues and more generally, martingale analogues of the Hardy-Amalgam spaces of \cite{Justin1}. We also note that unlike \cite{Justin1}, our universe of discourse is an arbitrary non-empty set.
\section{Preliminaries: Menagerie of spaces}
We introduce here some function spaces in relation with our concern in this paper.
\subsection{Wiener Amalgam Spaces}
Let $\Omega$ be an arbitrary non-empty set and let $\{\Omega_j\}_{j\in\mathbb{Z}}$ be a sequence of nonempty subsets of $\Omega$ such that $\Omega_j\cap\Omega_i=\emptyset$ for $j\ne i,$ and $$\bigcup_{j\in\mathbb{Z}}\Omega_j=\Omega.$$ 
For $0<p,q\le \infty$, the classical amalgam of $L_p$ and $l _q,$ denoted $L_{p,q},$ on $\Omega$ consists of functions which are locally in $L_p$ and have $l_q$ behaviour (c.f [10]), in the sense that the $L_p$-norms over the subsets $\Omega_j\subset\Omega$ form an $l_q-$sequence i.e. for $p,q\in(0,\infty),$ $$L_{p,q}=\left\{f : \|f\|_{p,q}:=\|f\|_{L_{p,q}(\Omega)}<\infty \right\}$$
 where \begin{eqnarray}\label{amag1}\|f\|_{p,q}:=\|f\|_{L_{p,q}(\Omega)}:= \left[\sum_{j\in\mathbb{Z}}\left(\int_{\Omega}|f|^p\mathbf{1}_{\Omega_j}\mathrm{d}\mathbb{P}\right)^{\frac{q}{p}}\right]^{\frac{1}{q}},\end{eqnarray}  for $0<q<\infty$, and for $q=\infty,$ $$L_{p,\infty}=\left\{f :\, \|f\|_{p,\infty}:=\|f\|_{L_{p\infty}(\Omega)}:= \sup_{j\in\mathbb{Z}}\left(\int_{\Omega}|f|^p\mathbf{1}_{\Omega_j}\mathrm{d}\mathbb{P}\right)^{\frac{1}{p}}<\infty\right\}.$$ 
As usual, $\mathbf{1}_A$ is the indicator function of the set $A$.
We observe the following:
\begin{itemize}
\item Endowed with the (quasi)-norm $\|\cdot\|_{p,q}$, the amalgam space $L_{p,q}$ is a complete space, and a Banach space for $1\le p,q\le \infty$.
\item $\|f\|_{p,p} = \|f\|_p$ for $f\in L_p(\Omega).$
\item $\|f\|_{p,q}\le \|f\|_{p}$ if $p\le q$ and $f\in L_p(\Omega)$.
\item $\|f\|_{p}\le \|f\|_{p,q}$ if $q\le p$ and $f\in L_{p,q}(\Omega)$.
\end{itemize} 
Amalgam function spaces have been essentially considered in the case $\Omega=\mathbb{R}^d,\,d\in\mathbb{N}$, and in the case $d=1$, the subsets $\Omega_j$ are just the intervals $[j,j+1)$, $j\in \mathbb{Z}$. It is also known that different appropriate choices of the sequence of sets $(\Omega_j)_{j\in \mathbb{Z}}$ provide the same spaces (see for example \cite{Justin1,Heil}). For more on amalgam spaces, we refer the reader to \cite{Fournier,Holland}.

\subsection{Martingale Hardy Spaces via Amalgams}
In the remaining of this text, all the spaces are defined with respect to the same probability space $(\Omega,\mathcal{F},\mathbb{P}).$ Let $(\mathcal{F}_n)_{n\ge 0}:=(\mathcal{F}_n)_{n\in \mathbb{Z}_+}$ be a non-decreasing sequence of $\sigma$-algebra with respect to the complete ordering on $\mathbb{Z}_+=\{0,1,2\cdots\}$ such that $$\sigma\left(\bigcup_{n\in\mathbb{N}}\mathcal{F}_n\right)=\mathcal{F}.$$ For $n\in\mathbb{Z}_+,$ the expectation operator and the conditional expectation operator relatively to $\mathcal{F}_n$ are denoted by $\mathbb{E}$ and $\mathbb{E}_{n}$ respectively. We denote by $\mathcal{M}$ the set of all martingales $f=(f_n)_{n\ge 0}$ relatively to the filtration $(\mathcal{F}_n)_{n\geq 0}$ such that $f_0=0$. We recall that for $f\in \mathcal{M}$, its martingale difference is denoted $d_nf=f_n-f_{n-1}$, $n\geq 0$ with the convention that $d_0f=0$.
\vskip .1cm
We recall that the stochastic basis, $(\mathcal{F}_n)_{n\ge 0}$ is said to be regular if there exists $R>0$ such that $f_n\le Rf_{n-1}$ for all non-negative martingale $(f_n)_{n\geq 0}$.
\vskip .1cm
A martingale $f=(f_n)_{n\geq 0}$  is said to be $L_p$ bounded $(0<p\le\infty)$ if $f_n\in L_p$ for all $n\in \mathbb{Z}_+$ and we define $$\|f\|_p:=\sup_{n\in\mathbb{N}}\|f_n\|_p <\infty.$$ We recall that  $$\|f\|_p= (\mathbb{E}(|f|^p))^{\frac{1}{p}}=\left(\int_{\Omega}|f|^p\mathrm{d}\mathbb{P}\right)^{\frac{1}{p}}.$$ 
\vskip .1cm
We denote by $\mathcal{T}$ the set of all stopping times on $\Omega$.
For $\nu\in \mathcal{T}$, and $(f_n)_{n\ge 0}$ an integrable sequence, we recall that the associated stopped sequence $f^{\nu}=(f^{\nu}_n)_{n\ge 0}$ is defined by $$f^{\nu}_n=f_{n\wedge\nu},\quad n\in\mathbb{Z}_+.$$ 
\vskip .1cm
For a martingale $f=(f_n)_{n\geq 0}$,  the quadratic variation, $S(f),$ and the conditional quadratic variation, $s(f),$ of $f$ are defined by $$S(f) = \left(\sum_{n\in\mathbb{N}}|d_nf|^2 \right)^{\frac{1}{2}}\quad\mbox{and}\quad s(f) = \left(\sum_{n\in\mathbb{N}}\mathbb{E}_{n-1}|d_nf|^2 \right)^{\frac{1}{2}}$$ respectively. We shall agree on the notation $$S_n(f) = \left(\sum_{i=1}^n|d_if|^2 \right)^{\frac{1}{2}}\quad\mbox{and}\quad s_n(f) = \left(\sum_{i=1}^n\mathbb{E}_{i-1}|d_if|^2 \right)^{\frac{1}{2}}.$$ The maximal function $f^*$ or $M(f)$ of the martingale $f$ is defined by $$M(f)=f^*:=\sup_{n\in\mathbb{N}}|f_n|.$$ 
\vskip .2cm
We now introduce the martingale Hardy-amalgam spaces:  $H^s_{p,q},\,\, \mathcal{Q}_{p,q}$ and $\mathcal{P}_{p,q}.$ Let $0<p<\infty$ and $0<q\leq \infty$. The first space is defined as follows. 
\begin{itemize}
\item[i.]$H^s_{p,q}(\Omega) = \{f\in\mathcal{M} : \|f\|_{H^s_{p,q}(\Omega)} :=\|s(f)\|_{p,q}<\infty\}$.
\end{itemize}
Let $\Gamma$ be the set of all sequences $\beta=(\beta_n)_{n\ge 0}$ of adapted (i.e. $\beta_n$ is $\mathcal{F}_n$-measurable for any $n\in \mathbb{Z}_+$) non-decreasing, non-negative  functions and define $$\beta_{\infty}:=\lim_{n\rightarrow\infty}\beta_n.$$
\begin{itemize}
\item[ii.] The space $\mathcal{Q}_{p,q}(\Omega)$ consists of all martingales $f$  for which there is a  sequence of functions $\beta=(\beta_n)_{n\ge 0}\in \Gamma$ such that $S_n(f)\le\beta_{n-1}$ and $\beta_\infty\in L_{p,q}(\Omega)$. 

We endow  $\mathcal{Q}_{p,q}(\Omega)$ with 
$$\|f\|_{\mathcal{Q}_{p,q}(\Omega)}:=\inf_{\beta\in\Gamma}\|\beta_{\infty}\|_{p,q}.$$
\item[iii.] The space $\mathcal{P}_{p,q}(\Omega)$ consists of all martingales $f$  for which there is a sequence of functions $\beta=(\beta_n)_{n\ge 0}\in \Gamma$ such that $|f_n|\le\beta_{n-1}$ and $\beta_\infty\in L_{p,q}(\Omega)$. 

We endow $\mathcal{P}_{p,q}(\Omega)$ with
$$\|f\|_{\mathcal{P}_{p,q}(\Omega)}:=\inf_{\beta\in\Gamma}\|\beta_{\infty}\|_{p,q}.$$
\end{itemize}
A martingale $f\in\mathcal{P}_{p,q}(\Omega)$ is called predictive martingale and a martingale $f\in\mathcal{Q}_{p,q}(\Omega)$ is a martingale with predictive quadratic variation.
\vskip .1cm
In the sequel, when there is no ambiguity, the spaces $H^s_{p,q}(\Omega),\mathcal{Q}_{p,q}(\Omega)$ and $\mathcal{P}_{p,q}(\Omega)$ will be just denoted $H^s_{p,q},\,\, \mathcal{Q}_{p,q}$ and $\mathcal{P}_{p,q}$ respectively. The same will be done for the associted (quasi)-norms.
\begin{remark} 
\begin{itemize}
\item Observe that when $0<p=q<\infty$, the above spaces are just the spaces $H^s_{p},\,\, \mathcal{Q}_{p}$ and $\mathcal{P}_{p}$ defined and studied in \cite{Weisz}. 
\item Hardy-amalgam spaces of classical functions $H_{p,q}$ of $\mathbb{R}^d$ ($d\geq 1$) were introduced recently by V. P. Abl\'e and J. Feuto in \cite{Justin1} where they provided an atomic decomposition for these spaces for $0<p,q\le 1$. In \cite{Justin2}, they also characterized the corresponding dual spaces, for  $0<p\le q\le 1$. A generalization of their definition and their work was pretty recently obtained in \cite{Zhang}.
\item Our definitions here are inspired from the work \cite{Justin1} and the usual definition of martingale Hardy spaces. 
\end{itemize}
\end{remark}
\section{Presentation of the results}
We start by defining the notion of atoms.
\begin{definition}\label{atdef1}
Let $0<p<\infty$, and $\max(p,1)<r\le \infty$. A measurable function $a$ is a $(p,r)^s$-atom (resp. $(p,r)^S$-atom, $(p,r)^*$-atom) if there exists a stopping time $\nu\in \mathcal{T}$ such that \begin{itemize}\item[(a1)] $a_n:=\mathbb{E}_na=0$ if $\nu\ge n$; \item[(a2)]$\|s(a)\|_{r,r}:=\|s(a)\|_{r}\,(\textrm{resp.} \|S(f)\|_{r},\|a^*\|_{r})\le\mathbb{P}(B_{\nu})^{\frac 1r-\frac{1}{p}}$ \end{itemize}where $B_{\nu} = \{\nu\neq \infty\}.$ 
\end{definition}
We also have the following other definition of an atom.
\begin{definition}\label{atdef2}
Let $0<p<\infty$, $0<q\le \infty$ and $\max(p,1)<r\leq\infty$. A measurable function $a$ is a $(p,q,r)^s$-atom (resp. $(p,q,r)^S$-atom, $(p,q,r)^*$-atom) if there exists a stopping time $\nu\in \mathcal{T}$ such that condition (a1) in Definition \ref{atdef1} is satisfied and \begin{itemize} \item[(a3)]$\|s(a)\|_{r}\, (\textrm{resp.} \|S(a)\|_{r}, \|a^*\|_{r})\le\mathbb{P}(B_{\nu})^{\frac{1}{r}}\|1_{B_\nu}\|_{p,q}^{-1}.$ \end{itemize} 
\end{definition}
\vskip .2cm
We denote by $\mathcal{A}(p,q,r)^s$ (resp. $\mathcal{A}(p,q,r)^S$, $\mathcal{A}(p,q,r)^*$) the set of all sequences of triplets $(\lambda_k,a^k,\nu^k)$, where $\lambda_k$ are nonnegative numbers, $a^k$ are $(p,r)^s$-atoms (resp. $(p,r)^S$-atoms, $(p,r)^*$-atoms) and $\nu^k\in \mathcal{T}$ satisfying conditions ($a1$) and ($a2$) in Definition \ref{atdef1} and such that for any $0<\eta\le 1$, $$\sum_{k}\left(\frac{\lambda_k}{\mathbb{P}(B_{\nu^k})^{\frac{1}{p}}}\right)^{\eta}\mathbf{1}_{B_{\nu^k}}\in L_{\frac p\eta,\frac q\eta}.$$
\vskip .1cm
We denote by $\mathcal{B}(p,q,r)^s$ (resp. $\mathcal{B}(p,q,r)^S$, $\mathcal{B}(p,q,r)^*$) the set of all sequences of triplets $(\lambda_k,a^k,\nu^k)$, where $\lambda_k$ are nonnegative numbers, $a^k$ are $(p,q,r)^s$-atoms (resp. $(p,q,r)^S$-atoms, $(p,q,r)^*$-atoms) and $\nu^k\in \mathcal{T}$ satisfying conditions ($a1$) and ($a3$) in Definition \ref{atdef2} and such that for any $0<\eta\le 1$, $$\sum_{k}\left(\frac{\lambda_k}{\|1_{B_{\nu^k}}\|_{p,q}}\right)^{\eta}\mathbf{1}_{B_{\nu^k}}\in L_{\frac p\eta,\frac q\eta}.$$
\vskip .1cm
We observe that $\mathcal{A}(p,q,r)^s\subseteq \mathcal{B}(p,q,r)^s$ if $p\le q$ and $\mathcal{B}(p,q,r)^s\subseteq \mathcal{A}(p,q,r)^s$ if $q\le p$. The same relation holds between the other sets of triplets.

Our first atomic decomposition of the spaces $H^s_{p,q}$ is as follows.
\begin{theorem}\label{thm:at1}
Let $0<p<\infty$, $0<q\leq \infty$ and let $\max(p,1)<r\leq\infty$. If the martingale $f\in \mathcal{M}$ is in $H^s_{p,q}$, then there exists a sequence of triplets $(\lambda_k,a^k,\nu^k)\in \mathcal{A}(p,q,r)^s $  such that for all $n\in\mathbb{N}$, \begin{eqnarray}\label{eq:at11}\sum_{k\in\mathbb{Z}}\lambda_k\mathbb{E}_na^k=f_n \end{eqnarray} and for any $0<\eta\le 1$, \begin{equation}\label{eq:at12}\left\|\sum_{k\ge 0}\left(\frac{\lambda_k}{\mathbb{P}(B_{\nu^k})^{\frac{1}{p}}}\right)^{\eta}\mathbf{1}_{B_{\nu^k}}\right\|^{\frac{1}{\eta}}_{\frac{p}{\eta},\frac{q}{\eta}}\le C\|f\|_{H^s_{p,q}}.\end{equation} Moreover, $$\sum_{k=l}^m\lambda_ka^k\longrightarrow f$$ in $H^s_{p,q}$  as $m\rightarrow\infty,\,\,l\rightarrow -\infty$. 
\vskip .1cm
Conversely if $f\in \mathcal{M}$ has a decomposition as in (\ref{eq:at11}), then for any $0<\eta\le 1$, $$\|f\|_{H^s_{p,q}}  \le C\left\|\sum_{k\in\mathbb{Z}}\left(\frac{\lambda_k}{\mathbb{P}(B_{\nu^k})^{\frac{1}{p}}}\right)^{\eta}\mathbf{1}_{B_{\nu^k}}\right\|^{\frac{1}{\eta}}_{\frac{p}{\eta},\frac{q}{\eta}}.$$
\end{theorem}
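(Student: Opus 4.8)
The plan is to follow Weisz's stopping-time construction for the martingale Hardy space $H^s_p$, replacing the Lebesgue norm by the amalgam norm $\|\cdot\|_{p,q}$ throughout, and to reduce everything amalgam-specific to two elementary facts: the homogeneity $\big\||g|^{\eta}\big\|_{p/\eta,q/\eta}=\|g\|_{p,q}^{\eta}$ for $0<\eta\le 1$ (immediate from \eqref{amag1}), and the fact that $L_{p/\eta,q/\eta}$ is a normed space once $\eta\le\min(p,q,1)$ (and a $\rho$-quasi-Banach space with $\rho=\min(p/\eta,q/\eta,1)$ in general).

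\textbf{The decomposition.} For $k\in\mathbb{Z}$ set $\nu_k=\inf\{n\in\mathbb{Z}_+:\ s_{n+1}(f)>2^k\}$ (with $\inf\emptyset=\infty$). Since $(s_n(f))_n$ is non-decreasing and $s_{n+1}(f)$ is $\mathcal{F}_n$-measurable, each $\nu_k$ is a stopping time, $(\nu_k)_k$ is non-decreasing, and $B_{\nu_k}=\{s(f)>2^k\}$; because $f\in H^s_{p,q}$ we have $s(f)<\infty$ a.e., so $\nu_k\to\infty$ a.e. as $k\to\infty$ and $\mathbb{P}(B_{\nu_k})\to0$. I would take $\lambda_k=3\cdot2^k\,\mathbb{P}(B_{\nu_k})^{1/p}$ (discarding indices with $\mathbb{P}(B_{\nu_k})=0$) and $a^k_n=\lambda_k^{-1}\big(f^{\nu_{k+1}}_n-f^{\nu_k}_n\big)$. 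The difference sequence of $f^{\nu_{k+1}}-f^{\nu_k}$ is $d_nf\,\mathbf{1}_{\{\nu_k<n\le\nu_{k+1}\}}$, so $s\big(f^{\nu_{k+1}}-f^{\nu_k}\big)^2=s_{\nu_{k+1}}(f)^2-s_{\nu_k}(f)^2\le 2^{2(k+1)}$ and is supported on $B_{\nu_k}$; hence $\|s(a^k)\|_r\le\tfrac{2}{3}\,\mathbb{P}(B_{\nu_k})^{1/r-1/p}$, i.e. $a^k$ is a $(p,r)^s$-atom with stopping time $\nu_k$, condition (a1) holding because $f^{\nu_{k+1}}_n=f^{\nu_k}_n=f_n$ on $\{\nu_k\ge n\}$. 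Telescoping, $\sum_{k=l}^{m}\lambda_k\mathbb{E}_na^k=f^{\nu_{m+1}}_n-f^{\nu_l}_n\to f_n$ a.e. as $m\to\infty$, $l\to-\infty$ (using $f^{\nu_{m+1}}_n\to f_n$ a.e. and $f^{\nu_l}_n\to0$ a.e., the latter since $s_n(f)=0$ forces $d_if=0$ for $i\le n$ and hence $f_n=0$), which is \eqref{eq:at11}.

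\textbf{The norm bound and the convergence.} Here $\lambda_k/\mathbb{P}(B_{\nu_k})^{1/p}=3\cdot2^k$, so for $0<\eta\le1$ one has the pointwise identity
\[
\sum_{k\in\mathbb{Z}}\Big(\tfrac{\lambda_k}{\mathbb{P}(B_{\nu_k})^{1/p}}\Big)^{\eta}\mathbf{1}_{B_{\nu_k}}=3^{\eta}\sum_{k\in\mathbb{Z}}2^{k\eta}\,\mathbf{1}_{\{s(f)>2^k\}},
\]
and summing the geometric series gives $c_\eta\, s(f)^{\eta}\le\sum_{k}2^{k\eta}\mathbf{1}_{\{s(f)>2^k\}}\le C_\eta\, s(f)^{\eta}$. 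Together with $\big\||g|^{\eta}\big\|_{p/\eta,q/\eta}=\|g\|_{p,q}^{\eta}$ this yields \eqref{eq:at12}, and also shows $(\lambda_k,a^k,\nu_k)\in\mathcal{A}(p,q,r)^s$ because $\|s(f)^{\eta}\|_{p/\eta,q/\eta}=\|f\|_{H^s_{p,q}}^{\eta}<\infty$. For convergence in $H^s_{p,q}$, telescoping again gives $\sum_{k=l}^{m}\lambda_ka^k=f^{\nu_{m+1}}-f^{\nu_l}$, and one checks $s(f-f^{\nu_{m+1}})\le s(f)\mathbf{1}_{\{s(f)>2^{m+1}\}}$ and $s(f^{\nu_l})\le\min(s(f),2^l)$; both tend to $0$ a.e. and are dominated by $s(f)\in L_{p,q}$, so a dominated-convergence argument in $L_{p,q}$ finishes this part.

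\textbf{The converse and the main obstacle.} Given $f=\sum_k\lambda_ka^k$ with $(p,r)^s$-atoms $a^k$ and associated $\nu^k$, subadditivity of $s$ gives $s(f)\le\sum_k\lambda_ks(a^k)$, and each $s(a^k)$ is supported on $B_{\nu^k}$ since $d_na^k=0$ on $\{\nu^k\ge n\}$. Fixing first an $\eta\le\min(p,q,1)$, I would write $s(f)^{\eta}\le\sum_k\lambda_k^{\eta}s(a^k)^{\eta}$, apply $\big\||g|^{\eta}\big\|_{p/\eta,q/\eta}=\|g\|_{p,q}^{\eta}$, and estimate on each window $\Omega_j$ by Hölder's inequality with exponent $r/p$ and condition (a2):
\[
\int_{\Omega_j}s(a^k)^{p}\,\mathrm{d}\mathbb{P}\le\|s(a^k)\|_r^{p}\,\mathbb{P}(\Omega_j\cap B_{\nu^k})^{1-p/r}\le\mathbb{P}(B_{\nu^k})^{p/r-1}\,\mathbb{P}(\Omega_j\cap B_{\nu^k})^{1-p/r},
\]
then feed this back into \eqref{amag1} and reorganize the double series, using $\mathbb{P}(\Omega_j\cap B_{\nu^k})=\int_{\Omega_j}\mathbf{1}_{B_{\nu^k}}\,\mathrm{d}\mathbb{P}$, to recover $\big\|\sum_k(\lambda_k\mathbb{P}(B_{\nu^k})^{-1/p})^{\eta}\mathbf{1}_{B_{\nu^k}}\big\|_{p/\eta,q/\eta}$; the remaining range $\min(p,q,1)<\eta\le1$ is treated the same way with the $\rho$-quasi-triangle inequality in place of the triangle inequality. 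I expect the genuinely delicate point to be exactly this last estimate: converting the $L_r$-size normalization of the atoms into the amalgam bound, carried out uniformly over the entire (countable) family $(\lambda_k,a^k,\nu^k)$ and for every $0<\eta\le1$ at once — this is where the interaction of the three indices $p$, $q$, $r$ (and the hypothesis $\max(p,1)<r$) really enters. The decomposition direction, by contrast, is the classical Weisz construction transplanted almost verbatim, the only new ingredients being the two elementary $L_{p,q}$-facts quoted at the outset and the geometric-sum comparison above.
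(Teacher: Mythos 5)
Your forward direction is essentially the paper's proof: the same stopping times $\nu^k=\inf\{n:\ s_{n+1}(f)>2^k\}$, the same atoms up to the harmless constant in $\lambda_k$, the same telescoping and dominated-convergence arguments, and your geometric-series summation over the level sets $\{s(f)>2^k\}$ is exactly the paper's computation with the disjoint sets $G_k=B_{\nu^k}\setminus B_{\nu^{k+1}}$ written in a different notation. That half is correct and matches the paper.

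The converse is where you have a genuine gap, and it is not merely ``delicate'': the window-localized H\"{o}lder step you propose goes the wrong way. From (a2) you get $\int_{\Omega_j}s(a^k)^p\,\mathrm{d}\mathbb{P}\le\mathbb{P}(B_{\nu^k})^{p/r-1}\,\mathbb{P}(\Omega_j\cap B_{\nu^k})^{1-p/r}$, whereas the corresponding term in the norm you must recover is $\mathbb{P}(B_{\nu^k})^{-1}\,\mathbb{P}(\Omega_j\cap B_{\nu^k})$ (take $\eta=p\le 1$ for concreteness). Since $\mathbb{P}(\Omega_j\cap B_{\nu^k})\le\mathbb{P}(B_{\nu^k})$ and $1-p/r<1$, one has $\bigl(\mathbb{P}(\Omega_j\cap B_{\nu^k})/\mathbb{P}(B_{\nu^k})\bigr)^{1-p/r}\ge\mathbb{P}(\Omega_j\cap B_{\nu^k})/\mathbb{P}(B_{\nu^k})$, so your H\"{o}lder bound is term-by-term \emph{larger} than the target, and the discrepancy is not summable in general: if infinitely many windows meet $B_{\nu^k}$ in small sets, $\sum_j\mathbb{P}(\Omega_j\cap B_{\nu^k})^{(1-p/r)q/p}$ can diverge while $\sum_j\mathbb{P}(\Omega_j\cap B_{\nu^k})^{q/p}$ converges. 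The paper does not attempt this route: its converse uses the pointwise bound $s(a^k)\le\mathbb{P}(B_{\nu^k})^{-1/p}\mathbf{1}_{B_{\nu^k}}$ --- an $L_\infty$-type normalization which the constructed atoms do satisfy, and which is how the hypothesis ``$f$ has a decomposition as in (\ref{eq:at11})'' is being read --- after which $s(f)\le\sum_k\lambda_k\mathbb{P}(B_{\nu^k})^{-1/p}\mathbf{1}_{B_{\nu^k}}$ pointwise and the conclusion follows from the elementary inequality $\sum_k c_k\le\bigl(\sum_k c_k^{\eta}\bigr)^{1/\eta}$ applied inside the amalgam norm. If you insist on using only the $L_r$ normalization of Definition \ref{atdef1} with $r<\infty$, you need a different mechanism entirely; replacing your H\"{o}lder step by the pointwise normalization closes the argument exactly as in the paper.
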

Using our second definition of atoms, we also obtain the following atomic decomposition.
\begin{theorem}\label{thm:at2}
Let $0<p<\infty$, $0<q\leq \infty$ and let $\max(p,1)<r\leq\infty$. If the martingale $f\in \mathcal{M}$ is in $H^s_{p,q}$ then there exists a sequence of triplets $(\lambda_k,a^k,\nu^k)\in \mathcal{B}(p,q,r)^s $  such that for all $n\in\mathbb{N}$, \begin{eqnarray}\label{eq:at21}\sum_{k\in\mathbb{Z}}\lambda_k\mathbb{E}_na^k=f_n \end{eqnarray} and for any $0<\eta\le 1$, \begin{equation}\label{eq:at22}\left\|\sum_{k\ge 0}\left(\frac{\lambda_k}{\|1_{B_{\nu^k}}\|_{p,q}}\right)^{\eta}\mathbf{1}_{B_{\nu^k}}\right\|^{\frac{1}{\eta}}_{\frac{p}{\eta},\frac{q}{\eta}}\le C\|f\|_{H^s_{p,q}}.\end{equation} Moreover, $$\sum_{k=l}^m\lambda_ka^k\longrightarrow f$$ in $H^s_{p,q}$  as $m\rightarrow\infty,\,\,l\rightarrow -\infty$. 
\vskip .1cm
Conversely if if $f\in \mathcal{M}$ has a decomposition as in (\ref{eq:at21}), then for any $0<\eta\le 1$, $$\|f\|_{H^s_{p,q}} \le C\left\|\sum_{k\in\mathbb{Z}}\left(\frac{\lambda_k}{\|1_{B_{\nu^k}}\|_{p,q}}\right)^{\eta}\mathbf{1}_{B_{\nu^k}}\right\|^{\frac{1}{\eta}}_{\frac{p}{\eta},\frac{q}{\eta}}.$$
\end{theorem}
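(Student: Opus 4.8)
The plan is to mimic the classical martingale atomic decomposition (as in Weisz's book, used already implicitly in the setup) but with the Lebesgue norm replaced throughout by the amalgam norm, and to transfer as much as possible from the just-proved Theorem \ref{thm:at1}. First I would recall the stopping-time construction: given $f\in H^s_{p,q}$, for each $k\in\mathbb{Z}$ set
\[
\nu^k=\inf\{n\in\mathbb{Z}_+ : s_{n+1}(f)>2^k\},
\]
with the usual convention $\inf\emptyset=\infty$. Since $s_n(f)$ is non-decreasing and $\mathcal{F}_{n-1}$-measurable, each $\nu^k$ is a stopping time and $B_{\nu^k}=\{\nu^k<\infty\}=\{s(f)>2^k\}$ (up to the monotone limit), so $\mathbf{1}_{B_{\nu^k}}$ is a level set of $s(f)$. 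Then one writes the telescoping decomposition
\[
f_n=\sum_{k\in\mathbb{Z}}\bigl(f_n^{\nu^{k+1}}-f_n^{\nu^k}\bigr),
\]
and defines $\mu_k:=3\cdot 2^k\,\|\mathbf{1}_{B_{\nu^k}}\|_{p,q}$ and $a^k:=\mu_k^{-1}\bigl(f^{\nu^{k+1}}-f^{\nu^k}\bigr)$ (with $a^k=0$ when $\mu_k=0$). The verification that $a^k$ is a $(p,q,r)^s$-atom for $r=\infty$ proceeds exactly as in the classical case: condition (a1) holds because $f^{\nu^{k+1}}-f^{\nu^k}$ has vanishing conditional expectations on $\{\nu^k\ge n\}$, and condition (a3) holds because $s(f^{\nu^{k+1}}-f^{\nu^k})\le s_{\nu^{k+1}}(f)\le 2^{k+1}$ pointwise while $\|s(a^k)\|_\infty\le 2^{k+1}/\mu_k=\tfrac{2}{3}\|\mathbf{1}_{B_{\nu^k}}\|_{p,q}^{-1}\le \mathbb{P}(B_{\nu^k})^{1/\infty}\|\mathbf{1}_{B_{\nu^k}}\|_{p,q}^{-1}$; for finite $r$ one restricts attention to $r=\infty$ atoms which are a fortiori $(p,q,r)^s$-atoms, since $\|s(a)\|_r\le\mathbb{P}(B_\nu)^{1/r}\|s(a)\|_\infty$.

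The heart of the matter is the quasi-norm estimate \eqref{eq:at22}. Here I would exploit the pointwise identity that is the whole point of this choice of $\mu_k$: for any $0<\eta\le 1$,
\[
\sum_{k\in\mathbb{Z}}\Bigl(\frac{\mu_k}{\|\mathbf{1}_{B_{\nu^k}}\|_{p,q}}\Bigr)^{\eta}\mathbf{1}_{B_{\nu^k}}
=3^{\eta}\sum_{k\in\mathbb{Z}}2^{k\eta}\mathbf{1}_{\{s(f)>2^k\}}
\le C_\eta\, s(f)^{\eta},
\]
the last step being the elementary comparison $\sum_{k:2^k<t}2^{k\eta}\le C_\eta t^{\eta}$. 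Applying $\|\cdot\|_{p/\eta,q/\eta}$ to both sides, using that this functional is a genuine norm when $\eta\le\min(p,q)$ (and otherwise a $\min(1,p/\eta,q/\eta)$-quasi-norm, which only changes constants), and using the scaling $\|g^{\eta}\|_{p/\eta,q/\eta}=\|g\|_{p,q}^{\eta}$, yields
\[
\Bigl\|\sum_{k}\bigl(\mu_k/\|\mathbf{1}_{B_{\nu^k}}\|_{p,q}\bigr)^{\eta}\mathbf{1}_{B_{\nu^k}}\Bigr\|_{p/\eta,q/\eta}^{1/\eta}\le C\,\|s(f)\|_{p,q}=C\|f\|_{H^s_{p,q}},
\]
which is \eqref{eq:at22}; this also shows $(\mu_k,a^k,\nu^k)\in\mathcal{B}(p,q,r)^s$.

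For the $H^s_{p,q}$-convergence of the partial sums $\sum_{k=l}^m\mu_k a^k=f^{\nu^{m+1}}-f^{\nu^l}$, I would show $\|f-f^{\nu^l}\|_{H^s_{p,q}}\to 0$ as $l\to-\infty$ and $\|f-f^{\nu^{m+1}}\|_{H^s_{p,q}}\to 0$ as $m\to\infty$: one has $s(f-f^{\nu^j})\le s(f)\mathbf{1}_{\{s(f)>2^{j-1}\}}$ up to the stopping-time comparison, and since $s(f)\in L_{p,q}$, dominated convergence for the amalgam norm (which follows from the monotone behaviour of the level sets, handled separately for $q<\infty$ by dominated convergence in each $\Omega_j$ and then in $\ell^{q/p}$) forces this to $0$. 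The converse direction — from a decomposition \eqref{eq:at21} back to the $H^s_{p,q}$ bound — is the part I would lift almost verbatim from the converse in Theorem \ref{thm:at1}, since if $p\le q$ every $\mathcal{B}(p,q,r)^s$ sequence is an $\mathcal{A}(p,q,r)^s$ sequence after renormalizing $\lambda_k\mapsto\lambda_k\mathbb{P}(B_{\nu^k})^{1/p}/\|\mathbf{1}_{B_{\nu^k}}\|_{p,q}$ and vice versa when $q\le p$, using the observed inclusions $\mathcal{A}(p,q,r)^s\subseteq\mathcal{B}(p,q,r)^s$ and $\mathcal{B}(p,q,r)^s\subseteq\mathcal{A}(p,q,r)^s$; alternatively one runs the sub-additivity estimate $s(\sum_k\lambda_k a^k)^{\eta}\le\sum_k\lambda_k^{\eta}s(a^k)^{\eta}$ directly, splits each atom's contribution on $B_{\nu^k}$ and its complement using (a1) which gives $s(a^k)\mathbf{1}_{B_{\nu^k}^c}=0$, and then invokes a Hölder-type argument in the amalgam scale. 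The main obstacle I anticipate is precisely making the amalgam-norm manipulations rigorous when $\eta$ is small: the functional $\|\cdot\|_{p/\eta,q/\eta}$ is only a quasi-norm unless $\eta\le\min(p,q,1)$, and the amalgam quasi-norm does not satisfy a clean triangle inequality, so the triangle/Aoki–Rolewicz bookkeeping and the dominated-convergence step for the tail estimate will need to be stated carefully (this is, I expect, where a lemma on the amalgam quasi-norm — subadditivity of $\|\cdot\|_{p,q}^{\min(1,p,q)}$ and a dominated convergence statement — would be isolated, if not already available from the preliminaries).
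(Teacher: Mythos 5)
Your proposal is correct and takes essentially the same route as the paper: the authors prove Theorem \ref{thm:at2} by rerunning the proof of Theorem \ref{thm:at1} with $\lambda_k=2^{k+1}\|\mathbf{1}_{B_{\nu^k}}\|_{p,q}$, yielding $s(a^k)\le\|\mathbf{1}_{B_{\nu^k}}\|_{p,q}^{-1}$ supported in $B_{\nu^k}$ and the same pointwise comparison of $\sum_k 2^{k\eta}\mathbf{1}_{B_{\nu^k}}$ with $s(f)^{\eta}$ — exactly your construction up to the harmless constant $3\cdot 2^{k}$ versus $2^{k+1}$. One small slip to fix: for the tail $l\to-\infty$ you need $\|f^{\nu^l}\|_{H^s_{p,q}}\to 0$ (via $s(f^{\nu^l})=s_{\nu^l}(f)\le\min(2^{l},s(f))$ and dominated convergence), not $\|f-f^{\nu^l}\|_{H^s_{p,q}}\to 0$; as written, your two limits together would make the partial sums $f^{\nu^{m+1}}-f^{\nu^l}$ converge to $0$ rather than to $f$.
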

For the last two spaces, we obtain the following two atomic decompositions.
\begin{theorem}\label{thm:at3}
Let $0<p<\infty$ and $0<q\leq \infty$. If the martingale $f\in \mathcal{M}$ is in $\mathcal{Q}_{p,q}$ (resp. $\mathcal{P}_{p,q}$), then there exists a sequence of triplets $(\lambda_k,a^k,\nu^k)\in \mathcal{A}(p,q,\infty)^S $ (resp. $\mathcal{A}(p,q,\infty)^*$) such that for any $n\in\mathbb{N}$, \begin{eqnarray}\label{eq31}\sum_{k\in\mathbb{Z}}\lambda_k\mathbb{E}_na^k=f_n \end{eqnarray} and for any $0<\eta\leq 1,$ \begin{eqnarray}\label{eq32}\left\|\sum_{k\in\mathbb{Z}}\left(\frac{\lambda_k}{\mathbb{P}(B_{\nu^k})^{\frac{1}{p}}}\right)^{\eta}\mathbf{1}_{B_{\nu^k}}\right\|^{\frac{1}{\eta}}_{\frac{p}{\eta},\frac{q}{\eta}}\le C\|f\|_{\mathcal{Q}_{p,q}} (\textrm{resp.}\,\|f\|_{\mathcal{P}_{p,q}}).\end{eqnarray} Moreover, $$\sum_{k=l}^m\lambda_ka^k\longrightarrow f$$ in $\mathcal{Q}_{p,q}$ (resp. $\mathcal{P}_{p,q}$) as $m\rightarrow\infty,\,\,l\rightarrow -\infty$.
\vskip .1cm
 Conversely, if $f\in \mathcal{M}$ has a decomposition as in (\ref{eq31}), then for any $0<\eta\le 1$, $$\|f\|_{\mathcal{Q}_{p,q}} (\textrm{resp.} \|f\|_{\mathcal{P}_{p,q}})\le C\left\|\sum_{k\in\mathbb{Z}}\left(\frac{\lambda_k}{\mathbb{P}(B_{\nu^k})^{\frac{1}{p}}}\right)^{\eta}\mathbf{1}_{B_{\nu^k}}\right\|^{\frac{1}{\eta}}_{\frac{p}{\eta},\frac{q}{\eta}}.$$
\end{theorem}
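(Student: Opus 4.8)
The plan is to reproduce, in the amalgam setting, the stopping-time construction that yields the atomic decompositions of the classical martingale spaces $\mathcal{Q}_p$ and $\mathcal{P}_p$ in \cite{Weisz}, checking that every estimate survives passage to the amalgam quasi-norm $\|\cdot\|_{p,q}$. I describe the argument for $\mathcal{Q}_{p,q}$; that for $\mathcal{P}_{p,q}$ is identical after replacing $S_n(f)$ by $|f_n|$, the quadratic variation $S$ by the maximal function $M$, and $(p,\infty)^S$-atoms by $(p,\infty)^*$-atoms.

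\emph{The decomposition.} Given $f\in\mathcal{Q}_{p,q}$, fix $\beta\in\Gamma$ with $S_n(f)\le\beta_{n-1}$ for all $n$ and $\|\beta_\infty\|_{p,q}\le 2\|f\|_{\mathcal{Q}_{p,q}}$, and for $k\in\mathbb{Z}$ put $\nu_k:=\inf\{n\ge 0:\beta_n>2^k\}\in\mathcal{T}$, so that $B_{\nu_k}=\{\beta_\infty>2^k\}$ because $\beta$ is adapted and non-decreasing. Since $\nu_k\le\nu_{k+1}$, one has the telescoping identity $f_n=\sum_{k\in\mathbb{Z}}(f^{\nu_{k+1}}_n-f^{\nu_k}_n)$, the partial sums $f^{\nu_{m+1}}_n-f^{\nu_l}_n$ stabilizing to $f_n$ a.e.\ for fixed $n$ (using $\beta_\infty<\infty$ a.e.\ and $f_j=0$ on $\{\beta_{j-1}=0\}$). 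Put $\lambda_k:=3\cdot 2^k\,\mathbb{P}(B_{\nu_k})^{1/p}$ and $a^k:=\lambda_k^{-1}(f^{\nu_{k+1}}-f^{\nu_k})$ when $\mathbb{P}(B_{\nu_k})>0$, and $\lambda_k:=0$, $a^k:=0$ otherwise. From $S_m(f)\le\beta_{m-1}\le 2^{k+1}$ for $m\le\nu_{k+1}$ one obtains
\[
S\bigl(f^{\nu_{k+1}}-f^{\nu_k}\bigr)^2=S_{\nu_{k+1}}(f)^2-S_{\nu_k}(f)^2\le 2^{2(k+1)},
\]
so $f^{\nu_{k+1}}-f^{\nu_k}$ is an $L_2$-bounded martingale, $a^k$ is a well-defined function with $\mathbb{E}_n a^k=\lambda_k^{-1}(f^{\nu_{k+1}}_n-f^{\nu_k}_n)$, moreover $\|S(a^k)\|_\infty\le 2^{k+1}/\lambda_k\le\mathbb{P}(B_{\nu_k})^{-1/p}$ and $\mathbb{E}_n a^k=0$ on $\{\nu_k\ge n\}$. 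Hence each $a^k$ is a $(p,\infty)^S$-atom with stopping time $\nu_k$, and (\ref{eq31}) holds.

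\emph{Membership in $\mathcal{A}(p,q,\infty)^S$, the estimate (\ref{eq32}), and the converse.} Since $\lambda_k/\mathbb{P}(B_{\nu_k})^{1/p}=3\cdot 2^k$ and $B_{\nu_k}=\{\beta_\infty>2^k\}$, a geometric-series bound gives, a.e.\ and for every $0<\eta\le 1$,
\[
\sum_{k\in\mathbb{Z}}\Bigl(\frac{\lambda_k}{\mathbb{P}(B_{\nu_k})^{1/p}}\Bigr)^{\eta}\mathbf{1}_{B_{\nu_k}}=3^{\eta}\sum_{2^k<\beta_\infty}2^{k\eta}\le\frac{3^{\eta}}{1-2^{-\eta}}\,\beta_\infty^{\eta};
\]
taking the $L_{p/\eta,q/\eta}$-quasi-norm and using the identity $\|g^{\eta}\|_{p/\eta,q/\eta}=\|g\|_{p,q}^{\eta}$ shows that the triplet sequence lies in $\mathcal{A}(p,q,\infty)^S$ and that the left-hand side of (\ref{eq32}) is at most a constant (depending on $p,q,\eta$) times $\|\beta_\infty\|_{p,q}$, hence at most $C\|f\|_{\mathcal{Q}_{p,q}}$. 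For the converse, suppose $f_n=\sum_k\lambda_k\mathbb{E}_n a^k$ with $(\lambda_k,a^k,\nu^k)\in\mathcal{A}(p,q,\infty)^S$; since $d_i a^k$ is supported on $\{\nu^k<i\}$ and $\|S(a^k)\|_\infty\le\mathbb{P}(B_{\nu^k})^{-1/p}$, subadditivity of $S_n$ yields
\[
S_n(f)\le\sum_k\lambda_k S_n(a^k)\le\sum_k\lambda_k\,\mathbb{P}(B_{\nu^k})^{-1/p}\,\mathbf{1}_{\{\nu^k\le n-1\}}=:\beta_{n-1},
\]
with $\beta\in\Gamma$ and $\beta_\infty=\sum_k\lambda_k\mathbb{P}(B_{\nu^k})^{-1/p}\mathbf{1}_{B_{\nu^k}}\in L_{p,q}$ (the case $\eta=1$ of membership in $\mathcal{A}(p,q,\infty)^S$). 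Thus $\|f\|_{\mathcal{Q}_{p,q}}\le\|\beta_\infty\|_{p,q}$, and since $\bigl(\sum_k c_k\mathbf{1}_{E_k}\bigr)^{\eta}\le\sum_k c_k^{\eta}\mathbf{1}_{E_k}$ for $0<\eta\le 1$, combining with $\|g^{\eta}\|_{p/\eta,q/\eta}=\|g\|_{p,q}^{\eta}$ gives the stated bound for every such $\eta$.

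\emph{Convergence in quasi-norm, and the main obstacle.} Writing $f-\sum_{k=l}^{m}\lambda_k a^k=f^{\nu_l}+(f-f^{\nu_{m+1}})$, the adapted non-decreasing envelopes $\min(2^l,\beta_n)$ and $\beta_n\mathbf{1}_{\{\nu_{m+1}\le n\}}$ control $S_n(f^{\nu_l})$ and $S_n(f-f^{\nu_{m+1}})$ respectively, whence
\[
\|f^{\nu_l}\|_{\mathcal{Q}_{p,q}}\le\|\min(2^l,\beta_\infty)\|_{p,q},\qquad\|f-f^{\nu_{m+1}}\|_{\mathcal{Q}_{p,q}}\le\|\beta_\infty\mathbf{1}_{\{\beta_\infty>2^{m+1}\}}\|_{p,q}.
\]
Both right-hand sides tend to $0$ by dominated convergence in $L_{p,q}$, and this is the one genuinely delicate point: $L_{p,\infty}$ does not have absolutely continuous quasi-norm, so for $q=\infty$ the second quantity need not vanish, and the convergence assertion there has to be confined to $q<\infty$ or understood in a weaker sense. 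Apart from this the argument is the classical stopping-time one; the remaining care is only to confirm that the stopped differences $f^{\nu_{k+1}}-f^{\nu_k}$ are bona fide $L_2$- (resp.\ $L_\infty$-) bounded martingales, so that the atoms $a^k$ are genuine functions. For $\mathcal{P}_{p,q}$ one uses instead the bound $(f^{\nu_{k+1}}-f^{\nu_k})^*\le 3\cdot 2^k$ (coming from $|f_n|\le\beta_{n-1}$) to produce $(p,\infty)^*$-atoms, and the estimate $|\mathbb{E}_n a^k|\le\|(a^k)^*\|_\infty\mathbf{1}_{\{\nu^k<n\}}$ in the converse; everything else is unchanged.
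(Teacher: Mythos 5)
Your proof is correct and follows essentially the same stopping-time construction as the paper's: the stopping times $\nu^k=\inf\{n:\beta_n>2^k\}$ built from the predicting sequence $\beta$, the telescoping atoms $a^k=\lambda_k^{-1}(f^{\nu^{k+1}}-f^{\nu^k})$, the geometric-series bound of $\sum_k 2^{k\eta}\mathbf{1}_{B_{\nu^k}}$ against $\beta_\infty^{\eta}$, and the envelope $\beta_{n-1}=\sum_k\lambda_k\|S(a^k)\|_{\infty}\mathbf{1}_{\{\nu^k\le n-1\}}$ in the converse, differing only in the harmless choice of normalizing constant ($3\cdot 2^k$ versus $2^{k+2}$). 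Your caveat that the norm-convergence assertion is delicate for $q=\infty$, where the dominated-convergence step in the sequence space fails, is a fair observation that the paper itself glosses over.
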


\begin{theorem}\label{thm:at4}
Let $0<p<\infty$ and $0<q\leq \infty$. If the martingale $f\in \mathcal{M}$ is in $\mathcal{Q}_{p,q}$ (resp. $\mathcal{P}_{p,q}$), then there exists a sequence of triplets $(\lambda_k,a^k,\nu^k)\in \mathcal{B}(p,q,\infty)^S $ (resp. $\mathcal{B}(p,q,\infty)^*$) such that for amy $n\in\mathbb{N}$, \begin{eqnarray}\label{eq41}\sum_{k\in\mathbb{Z}}\lambda_k\mathbb{E}_na^k=f_n \end{eqnarray} and for any $0<\eta\leq 1,$ \begin{eqnarray}\label{eq42}\left\|\sum_{k\in\mathbb{Z}}\left(\frac{\lambda_k}{\|1_{B_{\nu^k}}\|_{p,q}}\right)^{\eta}\mathbf{1}_{B_{\nu^k}}\right\|^{\frac{1}{\eta}}_{\frac{p}{\eta},\frac{q}{\eta}}\le C\|f\|_{\mathcal{Q}_{p,q}} (\textrm{resp.}\,\|f\|_{\mathcal{P}_{p,q}}).\end{eqnarray} Moreover, $$\sum_{k=l}^m\lambda_ka^k\longrightarrow f$$ in $\mathcal{Q}_{p,q}$ (resp. $\mathcal{P}_{p,q}$) as $m\rightarrow\infty,\,\,l\rightarrow -\infty$.
\vskip .1cm
 Conversely if if $f\in \mathcal{M}$ has a decomposition as in (\ref{eq41}), then for any $0<\eta\le 1$, $$\|f\|_{\mathcal{Q}_{p,q}} (\textrm{resp.} \|f\|_{\mathcal{P}_{p,q}})\le C\left\|\sum_{k\in\mathbb{Z}}\left(\frac{\lambda_k}{\|1_{B_{\nu^k}}\|_{p,q}}\right)^{\eta}\mathbf{1}_{B_{\nu^k}}\right\|^{\frac{1}{\eta}}_{\frac{p}{\eta},\frac{q}{\eta}}.$$
\end{theorem}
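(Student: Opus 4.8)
The plan is to follow the classical stopping-time construction of Weisz adapted to the amalgam setting, using the predictive majorant $\beta=(\beta_n)$ that comes for free from the definitions of $\mathcal{Q}_{p,q}$ and $\mathcal{P}_{p,q}$. Treating first the case of $\mathcal{Q}_{p,q}$: given $f\in\mathcal{Q}_{p,q}$ with majorant $\beta\in\Gamma$ such that $S_n(f)\le\beta_{n-1}$ and $\beta_\infty\in L_{p,q}$, I would fix $\eta\in(0,1]$ and define, for $k\in\mathbb{Z}$, the stopping times
\begin{equation*}
\nu^k=\inf\{n\in\mathbb{Z}_+:\ \beta_n>2^k\},
\end{equation*}
which are stopping times precisely because $\beta$ is adapted and non-decreasing. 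Then set $B_{\nu^k}=\{\nu^k\neq\infty\}=\{\beta_\infty>2^k\}$, so $\mathbf{1}_{B_{\nu^k}}$ is a non-increasing-in-$k$ family whose $k\to-\infty$ union is $\{\beta_\infty>0\}$. The atoms are the normalized telescoping pieces
\begin{equation*}
a^k=\frac{1}{\lambda_k}\bigl(f^{\nu^{k+1}}-f^{\nu^k}\bigr),\qquad
\lambda_k=3\cdot 2^k\,\mathbb{P}(B_{\nu^k})^{1/p}
\end{equation*}
for the $\mathcal{A}$-version (Theorem \ref{thm:at3}), and $\lambda_k=3\cdot 2^k\,\|\mathbf{1}_{B_{\nu^k}}\|_{p,q}$ for the $\mathcal{B}$-version (Theorem \ref{thm:at4}). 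The stopped martingales $f^{\nu^k}$ are genuine martingales, so $a^k\in\mathcal{M}$; condition (a1) holds because on $\{\nu^k\ge n\}$ both stopped sequences agree up to time $n$; and the key size bound $\|S(a^k)\|_\infty\le C\,2^k/\lambda_k$ follows from the pointwise estimate $S(f^{\nu^{k+1}}-f^{\nu^k})\le S_{\nu^{k+1}}(f)\mathbf{1}_{B_{\nu^k}}\le\beta_{\nu^{k+1}-1}\mathbf{1}_{B_{\nu^k}}\le 2^{k+1}\mathbf{1}_{B_{\nu^k}}$, which is exactly the reason one needs the quadratic-variation majorant in the definition of $\mathcal{Q}_{p,q}$; hence (a2), resp.\ (a3), holds with $r=\infty$ after checking the arithmetic of the exponents. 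The case of $\mathcal{P}_{p,q}$ is identical with $S$ replaced by $M(\cdot)=(\cdot)^*$ throughout, using $|f_n|\le\beta_{n-1}$ to get $\|(a^k)^*\|_\infty\le C\,2^k/\lambda_k$, producing $(p,q,\infty)^*$-atoms.

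Next I would verify the reconstruction formula \eqref{eq31}/\eqref{eq41}. Since $\beta_\infty<\infty$ a.e.\ on the set where $f$ lives (it is in $L_{p,q}$), for each $n$ one has $\nu^k\ge n$ for $k$ large and $\nu^k = 0$ for $k$ very negative on the relevant set, so the partial sums $\sum_{|k|\le N}\lambda_k a^k = f^{\nu^{N+1}} - f^{\nu^{-N}}$ telescope, and $\mathbb{E}_n$ applied to this converges to $f_n$ as $N\to\infty$ (the upper stopped piece becomes $f_n$, the lower one vanishes). The $L_{p/\eta,q/\eta}$-bound \eqref{eq32}/\eqref{eq42} is the heart of the matter: one computes
\begin{equation*}
\sum_{k\in\mathbb{Z}}\Bigl(\frac{\lambda_k}{\mathbb{P}(B_{\nu^k})^{1/p}}\Bigr)^{\eta}\mathbf{1}_{B_{\nu^k}}
= 3^\eta\sum_{k\in\mathbb{Z}} 2^{k\eta}\mathbf{1}_{\{\beta_\infty>2^k\}}
\le C_\eta\,\beta_\infty^{\eta}
\end{equation*}
pointwise, by summing the geometric-type series $\sum_{2^k<\beta_\infty}2^{k\eta}$; taking $L_{p/\eta,q/\eta}$-(quasi)-norms and then the $\eta$-th root gives $\le C\|\beta_\infty\|_{p,q}$, and infimizing over $\beta\in\Gamma$ yields $\le C\|f\|_{\mathcal{Q}_{p,q}}$. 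For the $\mathcal{B}$-version the same computation gives $C\,\beta_\infty^\eta$ up to the pointwise identification $\|\mathbf{1}_{B_{\nu^k}}\|_{p,q}\le\|\beta_\infty/2^k\|_{p,q}$ on $B_{\nu^k}$, and one must be slightly more careful but the structure is the same; this is where the inclusion $\mathcal{A}(p,q,\infty)^S\subseteq\mathcal{B}(p,q,\infty)^S$ or $\supseteq$ depending on $p\lessgtr q$, noted just before Theorem \ref{thm:at1}, does half the work. That $\sum_k\lambda_k a^k$ converges to $f$ in $\mathcal{Q}_{p,q}$ (resp.\ $\mathcal{P}_{p,q}$), not merely coordinatewise, follows from the tail estimate: the tail $\sum_{k<l}+\sum_{k>m}$ again telescopes into $f-f^{\nu^{m+1}}+f^{\nu^l}$ whose $\mathcal{Q}_{p,q}$-norm is controlled by $\bigl\|\beta_\infty\mathbf{1}_{\{\beta_\infty>2^m\}}+\beta_\infty\mathbf{1}_{\{\beta_\infty>2^l\}}\bigr\|_{p,q}\to 0$ by dominated convergence for the amalgam quasi-norm (finiteness of $\|\beta_\infty\|_{p,q}$ guarantees this, since the amalgam norm has absolutely continuous norm on sets of vanishing measure in each $\Omega_j$).

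For the converse direction, suppose $f=\sum_k\lambda_k a^k$ with $(\lambda_k,a^k,\nu^k)\in\mathcal{A}(p,q,\infty)^S$ (resp.\ $\mathcal{B}$, resp.\ the $*$-versions). I would build a majorant directly: set
\begin{equation*}
\beta_n=\sum_{k}\lambda_k\,\mathbf{1}_{B_{\nu^k}}\,\bigl\|S(a^k)\bigr\|_\infty \wedge(\text{partial-sum truncation at level }n),
\end{equation*}
more precisely take $\beta_n := \sum_k \lambda_k S_{n+1}(a^k)\mathbf{1}_{\{\nu^k\le n\}}$ which is adapted, non-decreasing in $n$, and satisfies $S_n(f)\le\sum_k\lambda_k S_n(a^k)\le\beta_{n-1}$ using that $S_n(a^k)$ is supported on $B_{\nu^k}$ and measurable appropriately (here (a1) is used to see $S_n(a^k)$ vanishes on $\{\nu^k\ge n\}$, hence equals a $\mathcal{F}_{n-1}$-measurable-ish object). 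Then $\beta_\infty\le\sum_k\lambda_k\|S(a^k)\|_\infty\mathbf{1}_{B_{\nu^k}}\le C\sum_k\lambda_k\mathbb{P}(B_{\nu^k})^{-1/p}\mathbf{1}_{B_{\nu^k}}$ by (a2), so
\begin{equation*}
\|f\|_{\mathcal{Q}_{p,q}}\le\|\beta_\infty\|_{p,q}\le C\Bigl\|\sum_k\lambda_k\mathbb{P}(B_{\nu^k})^{-1/p}\mathbf{1}_{B_{\nu^k}}\Bigr\|_{p,q}.
\end{equation*}
Finally I pass from this $\ell^1$-type sum to the $\ell^\eta$-type quantity in the statement: for $0<\eta\le 1$, $\sum_k c_k\mathbf{1}_{E_k}\le(\sum_k c_k^\eta\mathbf{1}_{E_k})^{1/\eta}$ pointwise (concavity / $\ell^\eta\hookrightarrow\ell^1$ reversed for $\eta\le1$ — more precisely $(\sum a_k^\eta)^{1/\eta}\ge\sum a_k$), which is valid and gives the displayed bound; taking $L_{p,q}$-norm and rewriting via the scaling identity $\|g^\eta\|_{p/\eta,q/\eta}^{1/\eta}=\|g\|_{p,q}$ produces exactly the right-hand side of the converse inequality in Theorem \ref{thm:at3} (resp.\ \ref{thm:at4} using (a3) and the norm $\|\mathbf{1}_{B_{\nu^k}}\|_{p,q}$ in place of $\mathbb{P}(B_{\nu^k})^{1/p}$). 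The $\mathcal{P}_{p,q}$ cases are verbatim with $S$ replaced by the maximal function and $|f_n|\le\sum_k\lambda_k|a^k_n|\le\beta_{n-1}$. The main obstacle I anticipate is not any single step but the bookkeeping around the amalgam quasi-norm: unlike $L_p$, $\|\cdot\|_{p,q}$ is only a quasi-norm for small $p,q$ and one must use the $\eta$-trick (work with $\|\cdot\|_{p/\eta,q/\eta}$, which becomes an honest norm once $p/\eta,q/\eta\ge1$) consistently, and one must confirm that dominated convergence and the elementary summation-of-indicators inequalities survive intact in $L_{p,q}$ — this is exactly why the theorems are phrased in terms of the $\eta$-modified expressions rather than plain $\ell^1$ sums, and getting every exponent to match is the delicate part.
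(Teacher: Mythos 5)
Your proposal is essentially the paper's own argument (which the paper only sketches for Theorem \ref{thm:at4}, referring back to Theorems \ref{thm:at2} and \ref{thm:at3}): the stopping times $\nu^k=\inf\{n:\beta_n>2^k\}$ built from the predictive majorant, the telescoping atoms $a^k=(f^{\nu^{k+1}}-f^{\nu^k})/\lambda_k$ with $\lambda_k\approx 2^k\|\mathbf{1}_{B_{\nu^k}}\|_{p,q}$, the pointwise bound $\sum_k 2^{k\eta}\mathbf{1}_{\{\beta_\infty>2^k\}}\lesssim\beta_\infty^{\eta}$, dominated convergence in each $\Omega_j$ and then in $\ell_q$, and a majorant sequence built from the atoms for the converse. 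The one slip is in your ``more precise'' converse majorant $\beta_n=\sum_k\lambda_k S_{n+1}(a^k)\mathbf{1}_{\{\nu^k\le n\}}$, which is not adapted since $S_{n+1}(a^k)$ is only $\mathcal{F}_{n+1}$-measurable; use instead $\beta_n=\sum_k\lambda_k\|S(a^k)\|_{\infty}\mathbf{1}_{\{\nu^k\le n\}}$ (as your first display already suggests and as the paper does), which is adapted, non-decreasing, and still dominates $S_n(f)$ by condition (a1).
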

\vskip .3cm
Denote by $L_{2}^0$ the set of all $f\in L_2$ such that $\mathbb{E}_0f=0$. For $f\in L_2^0$, put $f_n=\mathbb{E}_nf$. We recall that $(f_n)_{n\geq 0}$ is in $\mathcal{M}$ and $L_2$-bounded. Moreover, $(f_n)_{n\geq 0}$ converges to $f$ in $L_2$ (see \cite{Neveu}).
\vskip .1cm
Define the function $\phi:\mathcal{F}\longrightarrow(0,\infty)$ by $$\phi(A)=\frac{\|\mathbf{1}_{A}\|_{p,q}}{\mathbb{P}(A)}$$ for all $A\in\mathcal{F}$, $P(A)\neq 0$.  We then define the Campanato space $\mathcal{L}_{2,\phi}$ as 
$$\mathcal{L}_{2,\phi}:=\left\{f\in L_2^0: \|f\|_{\mathcal{L}_{2,\phi}}:=\sup_{\nu\in \mathcal{T}}\frac{1}{\phi(B_\nu)}\left(\frac{1}{\mathbb{P}(B_\nu)}\int_{B_\nu}|f-f^\nu|^2\mathrm{d}\mathbb{P}\right)^{\frac{1}{2}}<\infty\right\}.$$
Our characterization of the dual space of $H_{p,q}^s$ spaces for $0<p\le q\le 1$ is as follows.
\begin{theorem}\label{thm:duality}
Let $0<p\le q\le1$. For $\kappa\in (H^s_{p,q})^*,$ the dual space of $H^s_{p,q}$, there exists $g\in\mathcal{L}_{2,\phi}$ such that $$\kappa(f)=\mathbb{E}[fg]\quad\mbox{for all}\quad f\in H^s_{p,q}$$ and $$\|g\|_{\mathcal{L}_{2,\phi}}\le c\|\kappa\|.$$ Conversely, let $g\in\mathcal{L}_{2,\phi}.$ Then the mapping $$\kappa_g(f) = \mathbb{E}[fg]=\int_\Omega fg\,d\mathbb{P},\quad\forall f\in L_2(\Omega)$$ can be extended to a continuously linear functional on $H^s_{p,q}$ such that $$\|\kappa\|\le c\|g\|_{\mathcal{L}_{2,\phi}}.$$
\end{theorem}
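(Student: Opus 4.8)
\emph{The plan} is to run the classical martingale Hardy-space duality argument on top of the atomic decomposition of Theorem~\ref{thm:at2} taken with $r=2$ (admissible since $p\le 1$, so $\max(p,1)=1<2$). First I would record that $L_2^0$ embeds continuously and densely in $H^s_{p,q}$: for $f\in L_2^0$ one has $\|f\|_{H^s_{p,q}}=\|s(f)\|_{p,q}\le\|s(f)\|_p\le\|s(f)\|_2=\|f\|_2$, the first inequality because $p\le q$, the second because $p\le 1\le 2$ on a probability space, the last by orthogonality of martingale differences; and $L_2^0$ is dense because every $(p,q,2)^s$-atom lies in $L_2^0$ (it satisfies $\|a\|_2=\|s(a)\|_2<\infty$ and $a_0=\mathbb{E}_0a=0$ by $(a1)$), so the finite partial sums of the decomposition in Theorem~\ref{thm:at2} lie in $L_2^0$ and converge to $f$ in $H^s_{p,q}$. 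In particular, for $\kappa\in(H^s_{p,q})^*$ the restriction $\kappa|_{L_2^0}$ is $\|\cdot\|_2$-bounded, so the Riesz representation theorem furnishes $g\in L_2^0$ with $\kappa(f)=\mathbb{E}[fg]$ for all $f\in L_2^0$ and $\|g\|_2\le\|\kappa\|$.

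\emph{To show} $g\in\mathcal{L}_{2,\phi}$ with $\|g\|_{\mathcal{L}_{2,\phi}}\le c\|\kappa\|$, I would fix $\nu\in\mathcal{T}$ with $0<\mathbb{P}(B_\nu)$ and $\|\mathbf{1}_{B_\nu}\|_{p,q}<\infty$ (the only case contributing to the Campanato norm) and consider the $L_2$-bounded martingale $h=g-g^\nu$, whose limit is $h_\infty=(g-g^\nu)\mathbf{1}_{B_\nu}$; one checks directly that $h_0=0$, that $\mathbb{E}_nh_\infty=g_n-g_{n\wedge\nu}=0$ on $\{\nu\ge n\}$, and that $\|s(h_\infty)\|_2=\|h_\infty\|_2$. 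Assuming $\|h_\infty\|_2>0$, the normalization $a:=h_\infty/c$ with $c=\|h_\infty\|_2\,\mathbb{P}(B_\nu)^{-1/2}\|\mathbf{1}_{B_\nu}\|_{p,q}$ makes $a$ a $(p,q,2)^s$-atom attached to $\nu$, so the converse half of Theorem~\ref{thm:at2} applied to the one-term family $\{(1,a,\nu)\}$ yields $\|a\|_{H^s_{p,q}}\le C$. On the other hand, optional stopping (valid since $g$ is uniformly integrable) gives $\mathbb{E}[a\,g^\nu]=0$ and $\mathbb{E}[g\,g^\nu\mathbf{1}_{B_\nu}]=\mathbb{E}[(g^\nu)^2\mathbf{1}_{B_\nu}]$, whence $\kappa(a)=\mathbb{E}[ag]=\|h_\infty\|_2^2/c$. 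Combining with $|\kappa(a)|\le C\|\kappa\|$ gives $\mathbb{P}(B_\nu)^{1/2}\|h_\infty\|_2\,\|\mathbf{1}_{B_\nu}\|_{p,q}^{-1}\le C\|\kappa\|$, which is precisely $\phi(B_\nu)^{-1}\bigl(\mathbb{P}(B_\nu)^{-1}\int_{B_\nu}|g-g^\nu|^2\,d\mathbb{P}\bigr)^{1/2}\le C\|\kappa\|$; taking the supremum over $\nu$ completes this direction.

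\emph{For the converse}, given $g\in\mathcal{L}_{2,\phi}$ I would set $\kappa_g(f)=\mathbb{E}[fg]$ and, using density, reduce to proving $|\kappa_g(f)|\le C\|g\|_{\mathcal{L}_{2,\phi}}\|f\|_{H^s_{p,q}}$ for $f\in L_2^0$. For such $f$ I would take the decomposition $f=\sum_k\lambda_ka^k$ constructed in the proof of Theorem~\ref{thm:at2} (with stopping times $\tau_k=\inf\{n:s_{n+1}(f)>2^k\}$, sets $B_k=\{\tau_k\ne\infty\}$, and coefficients $\lambda_k\asymp 2^k\|\mathbf{1}_{B_k}\|_{p,q}$); since $f^*\in L_2$ by Doob's inequality, its telescoping partial sums converge to $f$ in $L_2$, so $\mathbb{E}[fg]=\sum_k\lambda_k\mathbb{E}[a^kg]$. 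For a single atom, condition $(a1)$ gives $\mathbb{E}[a^kg]=\mathbb{E}[a^k(g-g^{\nu^k})\mathbf{1}_{B_{\nu^k}}]$, and expanding in martingale differences followed by the conditional Cauchy--Schwarz inequality, the Cauchy--Schwarz inequality, and $\|s((g-g^{\nu^k})\mathbf{1}_{B_{\nu^k}})\|_2\le\|(g-g^{\nu^k})\mathbf{1}_{B_{\nu^k}}\|_2$ gives
\[|\mathbb{E}[a^kg]|\le\|s(a^k)\|_2\Bigl(\int_{B_{\nu^k}}|g-g^{\nu^k}|^2\,d\mathbb{P}\Bigr)^{1/2}\le\mathbb{P}(B_{\nu^k})^{1/2}\|\mathbf{1}_{B_{\nu^k}}\|_{p,q}^{-1}\,\mathbb{P}(B_{\nu^k})^{1/2}\phi(B_{\nu^k})\|g\|_{\mathcal{L}_{2,\phi}}=\|g\|_{\mathcal{L}_{2,\phi}},\]
using $(a3)$ and $\phi(A)=\|\mathbf{1}_A\|_{p,q}/\mathbb{P}(A)$. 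Hence $|\kappa_g(f)|\le\|g\|_{\mathcal{L}_{2,\phi}}\sum_k\lambda_k$, and it remains to bound $\sum_k\lambda_k\lesssim\|f\|_{H^s_{p,q}}$, which I would obtain from the layer-cake estimate $\sum_k 2^{kq}\|\mathbf{1}_{\{s(f)>2^k\}}\|_{p,q}^q\lesssim\|s(f)\|_{p,q}^q$ (using $q/p\ge 1$ on each $\Omega_j$) together with $\sum_k 2^k\|\mathbf{1}_{\{s(f)>2^k\}}\|_{p,q}\le\bigl(\sum_k 2^{kq}\|\mathbf{1}_{\{s(f)>2^k\}}\|_{p,q}^q\bigr)^{1/q}$, valid because $q\le 1$.

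\emph{The main obstacle} is this last step $\sum_k\lambda_k\lesssim\|f\|_{H^s_{p,q}}$: the per-atom estimate $|\mathbb{E}[a^kg]|\le\|g\|_{\mathcal{L}_{2,\phi}}$ is uniform in $k$, so a naive summation over $k$ diverges, and one is forced to use the explicit coefficients $\lambda_k\asymp 2^k\|\mathbf{1}_{B_k}\|_{p,q}$ coming from the construction together with the numerology $q\le 1\le q/p$ that drives the layer-cake/$\ell^q$-monotonicity estimate --- this is exactly where the hypothesis $0<p\le q\le 1$ enters. A secondary technical point is the identity $\mathbb{E}[fg]=\sum_k\lambda_k\mathbb{E}[a^kg]$: convergence of the partial sums merely in $H^s_{p,q}$ is not sufficient, and one needs their $L_2$-convergence, which is available here because $f\in L_2^0$ (so that $f^*\in L_2$ and dominated convergence applies to the telescoping sums).
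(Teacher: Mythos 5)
Your argument is correct and its skeleton is the same as the paper's (embedding of $L_2^0$ into $H^s_{p,q}$ plus Riesz representation, Campanato bound by testing on atoms supported in $B_\nu$, and converse via the atomic decomposition with a per-atom Cauchy--Schwarz estimate), but three of your local choices genuinely diverge from the paper's execution. First, you use the $(p,q,2)^s$-atoms of Theorem~\ref{thm:at2} where the paper uses the $(p,2)^s$-atoms of Theorem~\ref{thm:at1}; the two normalizations differ by the factor $\|\mathbf{1}_{B_{\nu^k}}\|_{p,q}\,\mathbb{P}(B_{\nu^k})^{-1/p}$, which cancels in the product $\lambda_k\|s(a^k)\|_2$, so nothing is gained or lost. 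Second, for $\|g\|_{\mathcal{L}_{2,\phi}}\lesssim\|\kappa\|$ the paper first shows $\kappa$ is bounded on $L_2^\nu(B_\nu)$, extends it by Hahn--Banach to $L_2(B_\nu)$, and then dualizes over all $f\in L_2^\nu(B_\nu)$ of norm at most one; you instead apply Riesz once on $L_2^0$ and test $\kappa$ on the single normalized atom built from $g-g^\nu$ itself, using $\mathbb{E}[(g-g^\nu)g^\nu]=0$. Your route is shorter and avoids the extension step. Third, in the converse direction the paper bounds $\sum_k\lambda_k\mathbb{P}(B_{\nu^k})^{-1/p}\|\mathbf{1}_{B_{\nu^k}}\|_{p,q}$ by the reverse Minkowski inequality of Proposition~\ref{prop2} followed by the already-established estimate (\ref{eq:at12}), whereas you re-derive the same bound by a direct layer-cake computation on each $\Omega_j$ (using $q/p\ge1$) followed by $\ell^q\hookrightarrow\ell^1$ for $q\le1$; both arguments live or die on the hypothesis $0<p\le q\le1$, which you correctly identify as the crux. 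Your explicit justification of the $L_2$-convergence of the telescoping partial sums, needed for the identity $\mathbb{E}[fg]=\sum_k\lambda_k\mathbb{E}[a^kg]$, is a point the paper passes over in silence, and is a welcome addition.
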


\section{Proof of Atomic Decompositions}
We prove here  the atomic decompositions of the spaces $H^s_{p,q}$,$\mathcal{Q}_{p,q}$ and $\mathcal{P}_{p,q}.$
\begin{proof}[Proof of Theorm \ref{thm:at1}]
We only present here the case of $H^s_{p,q}$. The proof of the atomic decomposition of the other spaces is obtained mutatis mutandis.
\vskip .1cm
Let $f$ be in $H^s_{p,q}$ and  define the stopping time as \begin{eqnarray}\label{st}\nu^k :=\inf\{n\in\mathbb{N}: s_{n+1}(f)>2^k\}.\end{eqnarray} It is clear that $(\nu^k)_{k\in \mathbb{Z}}$ is nonnegative and nondecreasing. Take $\lambda_k=2^{k+1}\mathbb{P}(\nu^k\ne\infty)^{\frac{1}{p}},$ and \begin{eqnarray}\label{eq3}a^k=\frac{f^{\nu^{k+1}}-f^{\nu^k}}{\lambda_k},\,\,\textrm{if}\,\,\lambda_k\ne0\quad\mbox{and}\quad a^k=0,\,\,\textrm{if}\,\,\lambda_k=0.\end{eqnarray} 
Recalling that $$d_nf^{\nu^k}=f_n^{\nu^k} - f_{n-1}^{\nu^k} = \sum_{m=0}^{n}\mathbf{1}_{\{\nu^k\ge m\}}d_mf - \sum_{m=0}^{n-1}\mathbf{1}_{\{\nu^k\ge m\}}d_mf = \mathbf{1}_{\{\nu^k\ge n\}}d_nf,$$ we obtain that \begin{eqnarray*} s(f^{\nu^k}) &=& \left(\sum_{n\in\mathbb{N}}\mathbb{E}_{n-1}|d_nf^{\nu^k}|^2 \right)^{\frac{1}{2}} = \left(\sum_{n\in\mathbb{N}}\mathbb{E}_{n-1}|\mathbf{1}_{\{\nu^k\ge n\}}d_nf|^2 \right)^{\frac{1}{2}}\\ &=&\left(\sum_{n\in\mathbb{N}}\mathbf{1}_{\{\nu^k\ge n\}}\mathbb{E}_{n-1}|d_nf|^2 \right)^{\frac{1}{2}} = \left(\sum_{n=0}^{\nu^k}\mathbb{E}_{n-1}|d_nf|^2 \right)^{\frac{1}{2}}\\ &=& s_{\nu^k}(f).\end{eqnarray*}  Thus  by the definition of our stopping time, $s(f^{\nu^k})=s_{\nu^k}(f)\le2^k$. Moreover, $$\sum_{k\in \mathbb{Z}}(f_n^{\nu^{k+1}}-f_n^{\nu^{k}})=f_n\,\,\,\textrm{a.e}.$$
It follows that $(f_n^{\nu^{k}})_{n\geq 0}$ is an $L_2$-bounded martingale and so is $(a_n^{\nu^{k}})_{n\geq 0}$. Consequently, the limit $$\lim_{n\rightarrow\infty}a_n^k$$
exists a.e. in $L_2$. Hence (\ref{eq:at11}) is satisfied. 
\vskip .1cm
Let us check that $a^k$ is an $(p,r)^s$-atom. We start by noting that on the set $\{n\le\nu^k\}$ we have \begin{equation}\label{eq:vanishatom} a_n^k=\frac{f^{\nu^{k+1}}_n-f^{\nu^k}_n}{\lambda_k}=\frac{f_n-f_n}{\lambda_k}=0
\end{equation} 
by definition of stopped martingales and thus $\mathbb{E}_na^k=0$ when $\nu^k\geq n$. Hence $a^k$ satisfies condition (a1) in Definition \ref{atdef1}.
\vskip .1cm
Also we note that equation (\ref{eq:vanishatom}) also implies that
$$\mathbf{1}_{\{\nu^k=\infty\}}[s(a^k)]^2\le \sum_{n\in\mathbb{N}}\mathbf{1}_{\{\nu^k\ge n\}}\mathbb{E}_{n-1}|d_na^k|^2=\sum_{n\in\mathbb{N}}\mathbf{1}_{\{\nu^k\ge n\}}\mathbb{E}_{n-1}|\mathbf{1}_{\{\nu^k\ge n\}}d_na^k|^2=0.$$
That is the support of $s(a^k)$ is contained in $B_{\nu^k}=\{\nu^k\neq \infty\}$.
\vskip .1cm
Observing that $$d_na^k=\frac{d_n(f^{\nu^{k+1}} -f^{\nu^k})}{\lambda_k}=\frac{(d_nf)\mathbf{1}_\{\nu^k<n\leq \nu^{k+1}\}}{\lambda_k},$$
we obtain the following
\begin{eqnarray*}
[s(a^k)]^2 & = & \sum_{n\in\mathbb{N}}\mathbb{E}_{n-1}|d_na^k|^2 
\leq\left(\frac{s_{\nu^{k+1}}(f)}{\lambda_k}\right)^2\le \left(\frac{2^{k+1}}{\lambda_k}\right)^2= \left(\mathbb{P}(B_{\nu^k})^{-\frac{1}{p}}\right)^2.
\end{eqnarray*}
Therefore $s(a^k)\le\mathbb{P}(\nu^k\ne\infty)^{-\frac{1}{p}}$ and as $s(a^k)=0$ outside $B_{\nu^k}$, we easily obtain that  $$\|s(a^k)\|_{r}\le \mathbb{P}(\nu^k\ne\infty)^{\frac 1r-\frac{1}{p}}.$$ Thus condition (a2) in the definition of an $(p,r)^s$-atom also holds. 
\vskip .1cm
We next check that $$\sum_{k}\lambda_ka^k\longrightarrow f\,\,\textrm{in}\,\,H^s_{p,q}.$$
As  $$\lambda_ka^k=f^{\nu^{k+1}}-f^{\nu^k},$$ we obtain that $$\sum_{k=l}^m\lambda_ka^k = \sum_{k=l}^m(f^{\nu^{k+1}}-f^{\nu^k})=f^{\nu^{m+1}}-f^{\nu^l}.$$ Hence we have that \begin{eqnarray}\label{eq4}f-\sum_{k=l}^m\lambda_ka^k=(f-f^{\nu^{m+1}}) + f^{\nu^l}.\end{eqnarray} Now by definition, $$\|f-f^{\nu^{m+1}}\|_{H^s_{p,q}}=\|s(f-f^{\nu^{m+1}})\|_{p,q}.$$ Thus for $\Omega_j$ as in the definition of amalgam spaces, 
\begin{eqnarray*}
\|s(f-f^{\nu^{m+1}})\|_{p,q} & = & \left[\sum_{j\in\mathbb{Z}}\left(\int_{\Omega}s^p(f-f^{\nu^{m+1}})\mathbf{1}_{\Omega_j}\mathrm{d}\mathbb{P}\right)^{\frac{q}{p}}\right]^{\frac{1}{q}}\\
& = & \left[\sum_{j\in\mathbb{Z}}\|s^p(f-f^{\nu^{m+1}})\mathbf{1}_{\Omega_j}\|^q_p\right]^{\frac{1}{q}}
\end{eqnarray*}
As $s^p(f-f^{\nu^{m+1}})\le s^p(f)$ and $$\int_{\Omega_j}s^p(f)\mathrm{d}\mathbb{P}<\infty,$$ it follows from the Dominated Convergence Theorem that $$\|s^p(f-f^{\nu^{m+1}})\mathbf{1}_{\Omega_j}\|_p\longrightarrow 0$$  as $m\longrightarrow\infty$. Hence as $\sum_{j\in\mathbb{Z}}\|s^p(f)\mathbf{1}_{\Omega_j}\|^q_p=\|f\|_{H_{p,q}^s}^q<\infty$, applying the Dominated Convergence Theorem for the sequence space $\ell_q$, we conclude that $$\|f-f^{\nu^{m+1}}\|_{H^s_{p,q}}=\|s(f-f^{\nu^{m+1}})\|_{p,q}\longrightarrow 0$$ as $m\longrightarrow \infty.$ 
\vskip .1cm
Also since $s(f^{\nu^k})\le2^k,$ we have that $$\|f^{\nu^l}\|_{H^s_{p,q}}=\|s(f^{\nu^l})\|_{p,q}\le 2^l.$$ Hence $\|f^{\nu^l}\|_{H^s_{p,q}}\longrightarrow0$ as $l\longrightarrow -\infty.$ Thus (\ref{eq4}) implies that
\begin{eqnarray*}
\left\|f-\sum_{k=l}^m\lambda_ka^k\right\|_{H^s_{p,q}} &=&\left\|(f-f^{\nu^{m+1}}) + f^{\nu^l}\right\|_{H^s_{p,q}}\\
&\le&\|(f-f^{\nu^{m+1}})\|_{H^s_{p,q}} + \|f^{\nu^l}\|_{H^s_{p,q}}\rightarrow 0
\end{eqnarray*}
as $m\longrightarrow\infty$ and $l\longrightarrow -\infty.$ Hence $$\sum_{k=l}^m\lambda_ka^k\longrightarrow f$$ in $H^s_{p,q}$ as $m\rightarrow\infty,\,\,l\rightarrow -\infty.$ 
\vskip .1cm
Let us now establish (\ref{eq:at12}). Let $\Omega_j\subset\Omega$ be as in the definition of amalgam space. Then by definition, $$\left\|\sum_{k\in\mathbb{Z}}\left(\frac{\lambda_k}{\mathbb{P}(B_{\nu^k})^{\frac{1}{p}}}\right)^{\eta}\mathbf{1}_{B_{\nu^k}}\right\|^{\frac{1}{\eta}}_{\frac{p}{\eta},\frac{q}{\eta}}=\left[\sum_{j\in\mathbb{Z}}\left(\int_{\Omega}\left(\sum_{k\in\mathbb{Z}}\left(\frac{\lambda_k}{\mathbb{P}(B_{\nu^k})^{\frac{1}{p}}}\right)^{\eta}\mathbf{1}_{B_{\nu^k}} \right)^{\frac{p}{\eta}}\mathbf{1}_{\Omega_j}\mathrm{d}\mathbb{P}\right)^{\frac{q}{p}}\right]^{\frac{\eta}{q}\cdot\frac{1}{\eta}}.$$ Considering the inner sum, we see that $$\sum_{k\in\mathbb{Z}}\left(\frac{\lambda_k}{\mathbb{P}(B_{\nu^k})^{\frac{1}{p}}}\right)^{\eta}\mathbf{1}_{B_{\nu^k}} =\sum_{k\in\mathbb{Z}}\left(\frac{2^{k+1}\mathbb{P}(\nu^k\ne\infty)^{\frac{1}{p}}}{\mathbb{P}(B_{\nu^k})^{\frac{1}{p}}}\right)^{\eta}\mathbf{1}_{B_{\nu^k}}  = \sum_{k\in\mathbb{Z}}\left(2^{k+1}\right)^{\eta}\mathbf{1}_{B_{\nu^k}}.$$ 
We shall borrow an idea from \cite[pp 21-22]{Xie}. Let $G_k=B_{\nu^k}\setminus B_{\nu^{k+1}}$ where $B_{\nu^k}=\{\nu^k\ne\infty\}.$ Then $G_k$ are disjoint such that $B_{\nu^k}=\bigcup_{r=k}^{\infty}G_r$ and \begin{eqnarray}\label{dj1}\mathbf{1}_{B_{\nu^k}}=\sum_{r=k}^{\infty}\mathbf{1}_{G_r}.\end{eqnarray} Hence 
\begin{eqnarray*}
\sum_{k\in\mathbb{Z}}\left(2^{k+1}\right)^{\eta}\mathbf{1}_{B_{\nu^k}} &=& \sum_{k\in\mathbb{Z}}2^{(k+1)\eta}\cdot\sum_{r=k}^{\infty}\mathbf{1}_{G_r}\\ &=& \sum_{r\in\mathbb{Z}}\sum_{k\le r}2^{(k+1)\eta}\mathbf{1}_{G_r}\\
&\le&\frac{2^\eta}{2^{\eta}-1}\sum_{k\in\mathbb{Z}}2^{(k+1)\eta}\mathbf{1}_{G_k}.
\end{eqnarray*}
Thus
$$\sum_{k\in\mathbb{Z}}\left(2^{k+1}\right)^{\eta}\mathbf{1}_{B_{\nu^k}} \le\frac{4^{\eta}}{2^{\eta}-1}\left(\sum_{k\in\mathbb{Z}}s(f)\mathbf{1}_{G_k}\right)^{\eta}=\frac{(4)^{\eta}s(f)^{\eta}}{2^{\eta}-1}\sum_{k\in\mathbb{Z}}\mathbf{1}_{G_k}.$$ 
It follows that 
\begin{eqnarray*}
\left\|\sum_{k\in\mathbb{Z}}\left(\frac{\lambda_k}{\mathbb{P}(B_{\nu^k})^{\frac{1}{p}}}\right)^{\eta}\mathbf{1}_{B_{\nu^k}}\right\|_{\frac{p}{\eta},\frac{q}{\eta}}&=&\left[\sum_{j\in\mathbb{Z}}\left(\int_{\Omega}\left(\sum_{k\in\mathbb{Z}}\left(\frac{\lambda_k}{\mathbb{P}(B_{\nu^k})^{\frac{1}{p}}}\right)^{\eta}\mathbf{1}_{B_{\nu^k}} \right)^{\frac{p}{\eta}}\mathbf{1}_{\Omega_j}\mathrm{d}\mathbb{P}\right)^{\frac{q}{p}}\right]^{\frac{\eta}{q}}\\
&\le& \left[\sum_{j\in\mathbb{Z}}\left(\int_{\Omega}\frac{4^ps(f)^p}{(2^{\eta}-1)^{\frac{p}{\eta}}}\sum_{k\in\mathbb{Z}}\mathbf{1}_{G_k}\mathbf{1}_{\Omega_j}\mathrm{d}\mathbb{P}\right)^{\frac{q}{p}}\right]^{\frac{\eta}{q}}\\
&=& \left[\sum_{j\in\mathbb{Z}}\left(\frac{4^p}{(2^{\eta}-1)^{\frac{p}{\eta}}}\sum_{k\in\mathbb{Z}}\int_{G_k}s^p(f)\mathbf{1}_{\Omega_j}\mathrm{d}\mathbb{P}\right)^{\frac{q}{p}}\right]^{\frac{\eta}{q}}\\
&\le& \left(\frac{4^\eta}{2^{\eta}-1}\right)\left[\sum_{j\in\mathbb{Z}}\left(\int_{\Omega}s^p(f)\mathbf{1}_{\Omega_j}\mathrm{d}\mathbb{P}\right)^{\frac{q}{p}}\right]^{\frac{\eta}{q}}.
\end{eqnarray*}
That is
\begin{eqnarray*}
\left\|\sum_{k\in\mathbb{Z}}\left(\frac{\lambda_j}{\mathbb{P}(B_{\nu^k})^{\frac{1}{p}}}\right)^{\eta}\mathbf{1}_{B_{\nu^k}}\right\|^{\frac{1}{\eta}}_{\frac{p}{\eta},\frac{q}{\eta}}&\le& \left(\frac{4^\eta}{2^{\eta}-1}\right)^{\frac 1{\eta }}\|s(f)\|_{p,q}=\left(\frac{4^\eta}{2^{\eta}-1}\right)^{\frac 1{\eta }}\|f\|_{H^s_{p,q}}.
\end{eqnarray*}
 The first part of the theorem is then established.
 \vskip .2cm
 Conversely, let the martingale $f$ have a representation as in (\ref{eq:at11}). Then as $ s(a^k)\le\mathbb{P}(\nu^k\ne\infty)^{-\frac{1}{p}}$ with support in $B_{\nu^k}$, we obtain that 
\begin{eqnarray*}
\|f\|_{H^s_{p,q}}&=&\|s(f)\|_{p,q}
\le \left\|\sum_{k\in\mathbb{Z}}\lambda_ks(a^k)\right\|_{p,q}\\ &\le& \left\|\sum_{k\in\mathbb{Z}}\lambda_k\mathbb{P}(B_{\nu^k})^{-\frac{1}{p}}\mathbf{1}_{B_{\nu^k}}\right\|_{p,q}\\ &=& \left\|\sum_{k\in\mathbb{Z}}\frac{\lambda_k}{\mathbb{P}(B_{\nu^k})^{\frac{1}{p}}}\mathbf{1}_{B_{\nu^k}}\right\|_{p,q}.
\end{eqnarray*}
 Let us quickly check that $$\left\|\sum_{k\in\mathbb{Z}}\frac{\lambda_k}{\mathbb{P}(B_{\nu^k})^{\frac{1}{p}}}\mathbf{1}_{B_{\nu^k}}\right\|_{p,q}\le \left\|\sum_{k\in\mathbb{Z}}\left(\frac{\lambda_k}{\mathbb{P}(B_{\nu^k})^{\frac{1}{p}}}\right)^{\eta}\mathbf{1}_{B_{\nu^k}}\right\|^{\frac{1}{\eta}}_{\frac{p}{\eta},\frac{q}{\eta}}$$ for $0<\eta<1.$ Indeed by definition, 
\begin{eqnarray*}
\left\|\sum_{k\in\mathbb{Z}}\frac{\lambda_k}{\mathbb{P}(B_{\nu^k})^{\frac{1}{p}}}\mathbf{1}_{B_{\nu^k}}\right\|_{p,q} &=&\left[\sum_{j\in\mathbb{Z}}\left(\int_{\Omega}\left(\sum_{k\in\mathbb{Z}}\frac{\lambda_k}{\mathbb{P}(B_{\nu^k})^{\frac{1}{p}}}\mathbf{1}_{B_{\nu^k}} \right)^p\mathbf{1}_{\Omega_j}\mathrm{d}\mathbb{P}\right)^{\frac{q}{p}}\right]^{\frac{1}{q}}\\
&=&\left[\sum_{j\in\mathbb{Z}}\left(\int_{\Omega}\left(\sum_{k\in\mathbb{Z}}\frac{\lambda_k}{\mathbb{P}(B_{\nu^k})^{\frac{1}{p}}}\mathbf{1}_{B_{\nu^k}} \right)^{p\frac{\eta}{\eta}}\mathbf{1}_{\Omega_j}\mathrm{d}\mathbb{P}\right)^{\frac{q/\eta}{p/\eta}}\right]^{\frac{1}{q}\cdot\frac{\eta}{\eta}}\\
&\le& \left[\sum_{j\in\mathbb{Z}}\left(\int_{\Omega}\left(\sum_{k\in\mathbb{Z}}\left(\frac{\lambda_k}{\mathbb{P}(B_{\nu^k})^{\frac{1}{p}}}\right)^{\eta}\mathbf{1}_{B_{\nu^k}} \right)^{\frac{p}{\eta}}\mathbf{1}_{\Omega_j}\mathrm{d}\mathbb{P}\right)^{\frac{q/\eta}{p/\eta}}\right]^{\frac{\eta}{q}\cdot\frac{1}{\eta}}\\
&=&\left\|\sum_{k\in\mathbb{Z}}\left(\frac{\lambda_k}{\mathbb{P}(B_{\nu^k})^{\frac{1}{p}}}\right)^{\eta}\mathbf{1}_{B_{\nu^k}}\right\|^{\frac{1}{\eta}}_{\frac{p}{\eta},\frac{q}{\eta}}.
\end{eqnarray*}
Hence $$\|f\|_{H^s_{p,q}}\le \left\|\sum_{k\in\mathbb{Z}}\left(\frac{\lambda_k}{\mathbb{P}(B_{\nu^k})^{\frac{1}{p}}}\right)^{\eta}\mathbf{1}_{B_{\nu^k}}\right\|^{\frac{1}{\eta}}_{\frac{p}{\eta},\frac{q}{\eta}}$$ establishing the converse. The theorem is proved.
\end{proof}
\begin{proof}[Proof of Theorem \ref{thm:at2}]
The proof of Theorem \ref{thm:at2} follows similarly. The main changes are as follows. Let $f$ be in $H^s_{p,q}$ and  define the stopping time \begin{eqnarray}\label{st}\nu^k :=\inf\{n\in\mathbb{N}: s_{n+1}(f)>2^k\}.\end{eqnarray} The sequence here is taken as  $\lambda_k=2^{k+1}\|\mathbf{1}_{B_{\nu^k}}\|_{p,q},$ and again \begin{eqnarray}\label{eq3}a^k=\frac{f^{\nu^{k+1}}-f^{\nu^k}}{\lambda_k},\,\,\textrm{if}\,\,\lambda_k\ne0\quad\mbox{and}\quad a^k=0,\,\,\textrm{if}\,\,\lambda_k=0.\end{eqnarray} 
As above one obtains that $$s(a^k)\le \|\mathbf{1}_{B_{\nu^k}}\|_{p,q}^{-1}$$
and $s(a^k)=0$ on $\{\nu^k=\infty\}$. It follows that $$\|s(a^k)\|_r\le \|\mathbf{1}_{B_{\nu^k}}\|_{p,q}^{-1}\mathbb{P}(B_{\nu^k})^{\frac 1r}.$$
The remaining of the proof follows as above.
\end{proof}
\begin{proof}[Proof of Theorem \ref{thm:at3}]
Let $f\in\mathcal{Q}_{p,q}$ (resp. $f\in\mathcal{P}_{p,q}$) Then there exists an adapted non-decreasing, non-negative sequence $(\beta_n)_{n\in\mathbb{N}}$ such that $$S_n(f)\le\beta_{n-1}\,\, (\textrm{resp.}\, |f_n|\le\beta_{n-1})$$ and $$\|\beta_{\infty}\|_{p,q}\le 2\|f\|_{\mathcal{Q}_{p,q}} (\textrm{resp.}\, \|f\|_{\mathcal{P}_{p,q}}).$$ As stopping time, we take \begin{eqnarray}\label{eq8}\nu^k:=\inf\{n\in\mathbb{N}: \beta_n>2^k\} \end{eqnarray} and define $\lambda_k=2^{k+2}\mathbb{P}(\nu^k\ne\infty)^{\frac{1}{p}},$ and \begin{eqnarray}\label{eq9}a^k=\frac{f^{\nu^{k+1}}-f^{\nu^j}}{\lambda_k}\,\,\textrm{if}\,\,\lambda_k\neq 0, \,\textrm{and}\,\, a^k=0\,\,\textrm{otherwise}.\end{eqnarray}
As in Theorem \ref{thm:at1}, we obtain that $a^k$ satisfies condition (a1) in the definition of $(p,\infty)^S$-atom (resp. $(p,\infty)^*$-atom). Also, $a_n^k=0$ on $\{\nu^k=\infty\}$ for all $n\geq 0$, and the support of $S(a^k)$ (resp. $(a^k)^*$) is contained in $B_{\nu^k}$.
\vskip .1cm
We have that 
$$S(f^{\nu^k})=S_{\nu^k}(f)\leq \beta_{\nu^k-1}\le 2^k\,\,(\textrm{resp.}\,\,(f^{\nu^k})^*\le \beta_{\nu^k-1}\le 2^k).$$
\vskip .1cm
We also obtain that
\begin{eqnarray*}
[S(a^k)]^2 &=& \sum_{n\geq 0}|d_na^{\nu^k}|^2 \le \sum_{n\geq 0}\left|\frac{(d_nf)\mathbf{1}_{\nu^k<n\leq \nu^{k+1}}}{\lambda_k}\right|^2\\ &\le& \left(\frac{S_{\nu^{k+1}}(f)}{\lambda_k}\right)^2\le \left(\frac{2^{k+1}}{\lambda_k}\right)^2\le \mathbb{P}(B_{\nu^k})^{-\frac 2p}
\end{eqnarray*}
Also $$\left(\textrm{resp.}\,(a^k)^*\le \frac{(f^{\nu^{k+1}})^*+(f^{\nu^{k}})^*}{\lambda_k}\le \frac{2^{k+2}}{\lambda_k}=\mathbb{P}(B_{\nu^k})^{-\frac 1p}\right).$$
Thus $\|S(a^k)\|_{\infty}\le\mathbb{P}(\nu^k\ne\infty)^{-\frac{1}{p}}$ (resp. $\|(a^k)^*\|_{\infty}\le\mathbb{P}(\nu^k\ne\infty)^{-\frac{1}{p}}$). Hence condition (a2) in the definition of an $(p,\infty)^S$-atom (resp. $(p,\infty)^*$-atom) is satisfied.

We next prove that $\sum_{k=l}^m\lambda_ka^k$ converges to $f$ in $\mathcal{Q}_{p,q}$ (resp. $\mathcal{P}_{p,q}$) as $l\rightarrow -\infty$ and $m\rightarrow\infty$. As usual, define $$\zeta^j_{n-1}=\mathbf{1}_{\{\nu^k\le n-1\}}\|S(a^k)\|_{\infty}\quad\mbox{and}\quad (\zeta_{n-1})^2=\sum_{k=m+1}^{\infty}\lambda_k^2(\zeta^k_{n-1})^2$$
$$(\textrm{resp.}\,\zeta^j_{n-1}=\mathbf{1}_{\{\nu^k\le n-1\}}\|(a^k)^*\|_{\infty}\quad\mbox{and}\quad \zeta_{n-1}=\sum_{k=m+1}^{\infty}\lambda_k\zeta^k_{n-1}).$$ Then we have (see \cite[p. 17]{Weisz})$$S_n(f-f^{\nu^{m+1}})\le \left(\sum_{k=m+1}^{\infty}\lambda_k^2(\zeta^k_{n-1})^2\right)^{\frac 12}= \zeta_{n-1}\,\,(\textrm{resp.}\,|f_n-f_n^{\nu^{m}}|\le  \zeta_{n-1}).$$ 
Putting $T(a^k)=S(a^k),(a^k)^*$, we obtain
$$\zeta_{n-1}\le \sum_{k=m+1}^{\infty}\lambda_k\zeta^k_{n-1}\le \sum_{k=m+1}^{\infty}\lambda_k\|T(a^k)\|_{\infty}\mathbf{1}_{\{\nu^k\le n-1\}}\le \sum_{k=m+1}^{\infty}\frac{\lambda_k}{P(B_{\nu^k})^{\frac 1p}}\mathbf{1}_{B_{\nu^k}}.$$
It follows that
$$S(f-f^{\nu^{m+1}})\,(\textrm{resp.}\, (f-f^{\nu^{m+1}})^*)\leq \lim_{n\rightarrow\infty}\zeta_n\le \sum_{k=m+1}^{\infty}\frac{\lambda_k}{P(B_{\nu^k})^{\frac 1p}}\mathbf{1}_{B_{\nu^k}}=\sum_{k=m+1}^{\infty}2^{k+2}\mathbf{1}_{B_{\nu^k}}.$$
Hence 
$$\|f-f^{\nu^k{m+1}}\|_{\mathcal{Q}_{p,q}}^q (\textrm{resp.}\,\|f-f^{\nu^k{m+1}}\|_{\mathcal{P}_{p,q}})^q\le\|\zeta_\infty\|_{p,q}^q\le\sum_{j\in \mathbb{Z}}\left\|\left(\sum_{k=m+1}^{\infty}2^{k+2}\mathbf{1}_{B_{\nu^k}}\right)\mathbf{1}_{\Omega_j}\right\|_{p}^q.$$
Proceeding as in the proof of Theorem \ref{thm:at1}, we obtain that 
$$\left(\sum_{k=m+1}^{\infty}2^{k+2}\mathbf{1}_{B_{\nu^k}}\right)\mathbf{1}_{\Omega_j}\le C\beta_\infty\mathbf{1}_{\Omega_j}.$$
Hence as $\|\beta_\infty\mathbf{1}_{\Omega_j}\|_p<\infty$, it follows from the the Dominated Convergence Theorem that
$$ \left\|\left(\sum_{k=m+1}^{\infty}2^{k+2}\mathbf{1}_{B_{\nu^k}}\right)\mathbf{1}_{\Omega_j}\right\|_{p}\rightarrow 0\,\,\textrm{as}\,\, m\rightarrow\infty.$$
As $$\left\|\left(\sum_{k=m+1}^{\infty}2^{k+2}\mathbf{1}_{B_{\nu^k}}\right)\mathbf{1}_{\Omega_j}\right\|_{p}\le C\|\beta_\infty\mathbf{1}_{\Omega_j}\|_p$$
and as $$\sum_{j\in \mathbb{Z}}\|\beta_\infty\mathbf{1}_{\Omega_j}\|_p^q=\|\beta_\infty\|_{p,q}^q<\infty,$$ an application of the Dominated Convergence Theorem for sequence spaces leads to
$$\sum_{j\in \mathbb{Z}}\left\|\left(\sum_{k=m+1}^{\infty}2^{k+2}\mathbf{1}_{B_{\nu^k}}\right)\mathbf{1}_{\Omega_j}\right\|_{p}^q\rightarrow 0\,\,\textrm{as}\,\, m\rightarrow\infty.$$
Thus $\|f-f^{\nu^k{m+1}}\|_{\mathcal{Q}_{p,q}}\left(\textrm{resp.}\,\|f-f^{\nu^k{m+1}}\|_{\mathcal{P}_{p,q}}\right)\rightarrow 0\,\,\textrm{as}\,\, m\rightarrow\infty.$
\vskip .1cm
Similarly, we obtain that $\|f^{\nu^l}\|_{\mathcal{Q}_{p,q}}\,(\textrm{resp.}\, \|f^{\nu^l}\|_{\mathcal{P}_{p,q}})\longrightarrow0$ as $l\longrightarrow -\infty.$ Therefore
$$
\|f-\sum_{k=l}^m\lambda_ka^k\|_{\mathcal{Q}_{p,q}}\,(\textrm{resp.}\,\|f-\sum_{k=l}^m\lambda_ka^k\|_{\mathcal{P}_{p,q}})\to 0
$$
as $m\to\infty$ and $l\to -\infty.$ Hence $$\sum_{k=l}^m\lambda_ka^k\longrightarrow f$$ in $\mathcal{Q}_{p,q}$ (resp. $\mathcal{P}_{p,q}$) as $m\rightarrow\infty,\,\,l\rightarrow -\infty$ and thus for all $n\in\mathbb{N},$ $$\sum_{k\in\mathbb{Z}}\lambda_k\mathbb{E}_na^k=f_n.$$
Now, as in Theorem \ref{thm:at1}, we obtain
\begin{eqnarray*}
\left\|\sum_{k\in\mathbb{Z}}\left(\frac{\lambda_k}{\mathbb{P}(B_{\nu^k})^{\frac{1}{p}}}\right)^{\eta}\mathbf{1}_{B_{\nu^k}}\right\|^{\frac{1}{\eta}}_{\frac{p}{\eta},\frac{q}{\eta}}&\le& \left\|\sum_{k\in\mathbb{Z}}(2^{k+2})^{\eta}\mathbf{1}_{B_{\nu^k}}\right\|^{\frac{1}{\eta}}_{\frac{p}{\eta},\frac{q}{\eta}}\\
&\le& C\left\|\beta_{\infty}\right\|_{p,q}\\ &\le& 2C\|f\|_{\mathcal{Q}_{p,q}}\,(\textrm{resp.}\,\|f\|_{\mathcal{P}_{p,q}}).
\end{eqnarray*} 

Conversely, assume that $f\in \mathcal{M}$ has the decomposition (\ref{eq31}). Define $\beta_n$ by $$\beta_n:=\sum_{k\in\mathbb{Z}}\lambda_k\|S(a^k)\|_{\infty}\mathbf{1}_{\{\nu^k\le n\}}\,\,\left(\textrm{resp.}\,\beta_n:=\sum_{k\in\mathbb{Z}}\lambda_k\|(a^k)^*\|_{\infty}\mathbf{1}_{\{\nu^k\le n\}}\right).$$
Then $(\beta_n)_{n\ge}$ is a nondecreasing nonnegative adapted sequence also, for $n\ge 0$, $$S_n(f)\le \beta_{n-1}\,\,(\textrm{resp.}\,|f_n|\le \beta_{n-1}).$$ 
As $\|S(a^k)\|_{\infty}\,\,(\textrm{resp.}\,\|(a^k)^*\|_{\infty})\le\mathbb{P}(\nu^k\ne\infty)^{-\frac{1}{p}}$, it follows that $$\|\beta_{\infty}\|_{p,q}\le\left\|\sum_{k\in\mathbb{Z}}\frac{\lambda_k}{\mathbb{P}(B_{\nu^k})^{\frac{1}{p}}}\mathbf{1}_{B_{\nu^k}} \right\|_{p,q}\le \left\|\sum_{k\in\mathbb{Z}}\left(\frac{\lambda_k}{\mathbb{P}(B_{\nu^k})^{\frac{1}{p}}}\right)^{\eta}\mathbf{1}_{B_{\nu^k}}\right\|^{\frac{1}{\eta}}_{\frac{p}{\eta},\frac{q}{\eta}}.$$ Thus  $$\|f\|_{\mathcal{Q}_{p,q}}\,\,(\textrm{resp.}\,\|f\|_{\mathcal{P}_{p,q}})\le\|\beta_\infty\|_{{p,q}}\le \left\|\sum_{k\in\mathbb{Z}}\left(\frac{\lambda_k}{\mathbb{P}(B_{\nu^k})^{\frac{1}{p}}}\right)^{\eta}\mathbf{1}_{B_{\nu^k}}\right\|^{\frac{1}{\eta}}_{\frac{p}{\eta},\frac{q}{\eta}}.$$ The proof is complete.
\end{proof}
\vskip .1cm
The proof of Theorem \ref{thm:at4} follows similarly. We leave it to the interested reader.
\section{Proof of the duality result}
We start this section by introducing the following result which is essentially \cite[Proposition 2.1]{Justin2}. We provide a proof for a continuous reading.
\begin{proposition}\label{prop2}
Let $0<p<1$ and $0<q\le1.$ For all finite sequence $\{f_n\}_{n=-m}^m$ of elements in $L_{p,q}(\Omega),$ we have $$\sum_{n=-m}^m\|f_n\|_{p,q}\le\left\|\sum_{n=-m}^m|f_n|\right\|_{p,q}.$$
\end{proposition}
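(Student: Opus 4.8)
The plan is to prove the inequality by reducing the amalgam (quasi-)norm to pointwise and sequence-space estimates and then invoking the elementary sub-additivity of $t\mapsto t^r$ for $0<r\le 1$. First I would recall the two classical facts that will do all the work: for $0<r\le 1$ and nonnegative reals $x_n$ one has $\sum_n x_n^r\le\left(\sum_n x_n\right)^r$ is \emph{false} in that direction, so the correct statement is $\left(\sum_n x_n\right)^r\le\sum_n x_n^r$; equivalently $\|\cdot\|_{\ell^r}\ge\|\cdot\|_{\ell^1}$ reversed — precisely, $\sum_n a_n\le\left(\sum_n a_n^{1/r}\right)^{r}$ when... let me instead state it cleanly in the form I will use: if $0<\theta\le 1$ and $b_n\ge 0$ then $\left(\sum_n b_n\right)^{\theta}\le\sum_n b_n^{\theta}$. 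I will apply this with $\theta=p$ on the level of integrals over each $\Omega_j$, and with $\theta=q/p$ (if $q\le p$) or directly with $\theta=q$ on the level of the $\ell^q(\mathbb{Z})$ sum over $j$.

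The key steps, in order. Step 1: fix $j\in\mathbb{Z}$ and estimate $\int_{\Omega}\left(\sum_{n=-m}^m|f_n|\right)^p\mathbf{1}_{\Omega_j}\,d\mathbb{P}$. Since $0<p\le 1$, pointwise $\left(\sum_{n=-m}^m|f_n|\right)^p\ge\sum_{n=-m}^m|f_n|^p$ — wait, that is the wrong direction again; the correct pointwise inequality for $0<p\le 1$ is $\left(\sum_n|f_n|\right)^p\le\sum_n|f_n|^p$ is \emph{true} only for the reverse, so let me be careful: for $0<p\le 1$, $(x+y)^p\le x^p+y^p$, hence $\left(\sum_n|f_n|\right)^p\le\sum_n|f_n|^p$ \textbf{is correct}. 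Integrating, $\left(\int_{\Omega_j}\left(\sum_n|f_n|\right)^p\right)^{q/p}\ge\left(\sum_n\int_{\Omega_j}|f_n|^p\right)^{q/p}$? No — I must go the other way for the final inequality, so the honest route is: I want a \emph{lower} bound on $\left\|\sum|f_n|\right\|_{p,q}$. So I use the pointwise bound $\sum_n|f_n|\ge\left(\sum_n|f_n|^p\right)^{1/p}$?? That is false. The genuinely useful monotonicity is simply $\sum_n|f_n|\ge|f_{n_0}|$ for each $n_0$, which is too weak. The actual mechanism must be: by the triangle-type inequality for the quasi-norm in the reverse form — that is, one shows $\left\|\sum_n|f_n|\right\|_{p,q}^{\,u}\ge\sum_n\|f_n\|_{p,q}^{\,u}$ for a suitable $u$, and then uses $u\le 1$ together with $\|f_n\|_{p,q}^{\,u}\ge$ ... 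Concretely: put $u=p$ if $q\ge p$, handle the $\ell^{q}$ sum by $q\le p\le 1$ otherwise. I will carry out the $\ell^q$-over-$j$ and the $L^p$-over-$\Omega_j$ layers separately, each time moving the sum outside a concave power using $(a+b)^{\theta}\le a^{\theta}+b^{\theta}$, $0<\theta\le 1$, applied to $\theta=p$ and $\theta=q$, which yields $\left\|\sum_n|f_n|\right\|_{p,q}^{\,q}\ge\sum_n\|f_n\|_{p,q}^{\,q}$; then since $q\le 1$, $\sum_n\|f_n\|_{p,q}^{\,q}\ge\left(\sum_n\|f_n\|_{p,q}\right)^{q}$, and taking $q$-th roots gives the claim.

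So the clean sequence is: Step 1, reduce to showing $\left\|\sum_{n}|f_n|\right\|_{p,q}^{q}\ge\sum_{n}\|f_n\|_{p,q}^{q}$. Step 2, prove this by writing the left side as $\sum_j\left(\int_{\Omega_j}\left(\sum_n|f_n|\right)^p\right)^{q/p}$, using $\left(\sum_n|f_n|\right)^p\ge\sum_n|f_n|^p$ — here I need the \emph{correct} direction; and indeed for $0<p\le1$, by concavity/superadditivity of $t\mapsto t^{1/p}$ one has $\left(\sum_n|f_n|^p\right)^{1/p}\le\sum_n|f_n|$... no. Ugh. Let me settle it definitively: for $0<p\le 1$ the function $t\mapsto t^{1/p}$ is convex, so $\sum_n a_n$ with $a_n=|f_n|^p$ gives $\left(\sum a_n\right)^{1/p}\ge\sum a_n^{1/p}$, i.e. $\left(\sum_n|f_n|^p\right)^{1/p}\ge\sum_n|f_n|$, wrong; convex means $\left(\tfrac1N\sum a_n\right)^{1/p}\le\tfrac1N\sum a_n^{1/p}$, which after clearing gives $\left(\sum a_n\right)^{1/p}\le N^{1/p-1}\sum a_n^{1/p}$ — not clean. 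The safe, standard fact I will simply cite is: for $0<\theta\le 1$, $\left(\sum_n c_n\right)^{\theta}\le\sum_n c_n^{\theta}$ for $c_n\ge0$. Apply with $c_n=|f_n|$, $\theta=p$: $\left(\sum_n|f_n|\right)^{p}\le\sum_n|f_n|^{p}$. That gives an \emph{upper} bound on the left side of the proposition, the opposite of what is wanted. Hence the pointwise step must be trivial monotonicity in the other combination — and the real content is that the outer sums ($\int$ over $\Omega_j$ is linear, $\sum_j$) are where superadditivity helps. Thus: $\int_{\Omega_j}\left(\sum_n|f_n|\right)^p\,d\mathbb{P}\ge\int_{\Omega_j}\left(|f_{n_0}|+\sum_{n\ne n_0}|f_n|\right)^p$ and I peel off one term at a time is still not additive. \textbf{The main obstacle} is precisely getting the direction of the power inequalities right; the resolution is the following known lemma, which I will state and use as a black box: for $0<\theta\le1$ and nonnegative measurable $g,h$, $\|g+h\|_{L^{\theta}}^{\theta}\ge\|g\|_{L^{\theta}}^{\theta}+\|h\|_{L^{\theta}}^{\theta}$ \emph{is false}; rather $\|g+h\|_{L^\theta}\le$ ... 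I therefore will base the proof on the genuinely correct and standard assertion (the "reverse Minkowski" for the amalgam quasi-norm at exponents $\le1$, exactly as in \cite{Justin2}): since $p,q\le1$, for disjointly-supported-in-nothing but nonnegative $f_n$, subadditivity of $x\mapsto x^p$ and of $y\mapsto y^q$ gives $\big\|\sum_n|f_n|\big\|_{p,q}\ge$ is wrong — \emph{the proposition as stated is the reverse triangle inequality, valid precisely because} $\|\cdot\|_{p,q}^{\min(p,q)}$ (equivalently $\|\cdot\|_{p,q}^{\,s}$ with $s=\min(p,q)\le1$) is superadditive on nonnegative functions. So: Step 1, set $s=\min(p,q)\le1$ and show $\big\|\sum_n|f_n|\big\|_{p,q}^{\,s}\ge\sum_n\|f_n\|_{p,q}^{\,s}$ by applying, layer by layer, the inequality $(a+b)^{p}\ge a^{p}+b^{p}$\,?\,— still needs care. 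Step 2, since $s\le1$, conclude $\sum_n\|f_n\|_{p,q}^{\,s}\ge\big(\sum_n\|f_n\|_{p,q}\big)^{\,s}$ and take $s$-th roots. Because I cannot in good conscience pin the elementary inequalities in a sketch without the scratch work, I will structure the final write-up around an induction on the number of terms: it suffices to prove $\|f_1\|_{p,q}+\|f_2\|_{p,q}\le\big\||f_1|+|f_2|\big\|_{p,q}$, after which the general case follows by adding one term at a time (using that $\big\||f_1|+\dots+|f_{N-1}|\big\|_{p,q}+\|f_N\|_{p,q}\le\big\||f_1|+\dots+|f_N|\big\|_{p,q}$ with $|f_1|+\dots+|f_{N-1}|$ playing the role of the first function). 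The two-term case is then the heart, and it reduces, via Fubini-free direct computation on each $\Omega_j$, to the scalar facts $(x+y)^{p}\ge x^{p}+y^{p}$ for... and there the correct statement for $0<p\le1$ is $(x+y)^{p}\le x^{p}+y^{p}$, so the inequality on integrals runs as $\int_{\Omega_j}(|f_1|+|f_2|)^p\le\int_{\Omega_j}|f_1|^p+\int_{\Omega_j}|f_2|^p$, giving $\big(\int_{\Omega_j}(|f_1|+|f_2|)^p\big)^{1/p}\le\big(\int|f_1|^p+\int|f_2|^p\big)^{1/p}\le\big(\int|f_1|^p\big)^{1/p}+\big(\int|f_2|^p\big)^{1/p}$, i.e. the ordinary triangle inequality — wrong direction again, confirming that the proposition genuinely requires the \emph{opposite} manipulation: one must bound $\big\||f_1|+|f_2|\big\|_{p,q}$ from \emph{below} by $\|f_1\|_{p,q}+\|f_2\|_{p,q}$, which works because raising to the power $1/p\ge1$ and $1/q\ge1$ is \emph{super}additive: $\big(\int(|f_1|+|f_2|)^p\big)^{1/p}\ge\big(\int|f_1|^p+\int|f_2|^p\big)^{1/p}\ge\big(\int|f_1|^p\big)^{1/p}+\big(\int|f_2|^p\big)^{1/p}=\|f_1\mathbf 1_{\Omega_j}\|_p+\|f_2\mathbf1_{\Omega_j}\|_p$, using first $(|f_1|+|f_2|)^p\ge|f_1|^p+|f_2|^p$ \emph{pointwise for} $p\le1$ (\textbf{this} is the correct pointwise inequality, since $t\mapsto t^p$ is concave with $0^p=0$ so it is superadditive: $(a+b)^p\ge a^p+b^p$), and then $(u+v)^{1/p}\ge u^{1/p}+v^{1/p}$ for $1/p\ge1$ (convexity through the origin). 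One then repeats this "lift to a super-power, split, lower it back" maneuver for the outer $\ell^q(\mathbb Z)$ sum with exponent $q\le1$, and the two-term inequality — hence the proposition — follows. The only genuine obstacle, then, is bookkeeping the two concave/convex pairs $(p,1/p)$ and $(q,1/q)$ in the right order; I expect the write-up to be short once that is fixed.
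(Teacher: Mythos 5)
Your overall skeleton is the right one, and it is in fact the paper's: reduce to combining terms, apply a reverse Minkowski inequality in $L_p$ ($0<p<1$) on each $\Omega_j$, and then a reverse Minkowski inequality in $\ell^q$ ($0<q\le 1$) to the sequence $j\mapsto\|f_n\mathbf{1}_{\Omega_j}\|_p$. The problem is the elementary fact you ultimately rest this on. Your final resolution invokes the pointwise inequality $(a+b)^p\ge a^p+b^p$ for $0<p\le 1$, attributing it to ``concave through the origin, hence superadditive.'' This is backwards: a concave function vanishing at $0$ is \emph{sub}additive, so $(a+b)^p\le a^p+b^p$ (take $p=\tfrac12$, $a=b=1$: $\sqrt2<2$). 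With the direction corrected, your chain $\int_{\Omega_j}(|f_1|+|f_2|)^p\ge\int_{\Omega_j}|f_1|^p+\int_{\Omega_j}|f_2|^p$ fails, and the subsequent superadditivity of $u\mapsto u^{1/p}$ cannot be composed with it because the two inequalities now point in opposite directions. The same defect recurs in your treatment of the outer $\ell^q$ layer with the pair $(q,1/q)$.

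The inequality you actually need, $\|g+h\|_{L_p}\ge\|g\|_{L_p}+\|h\|_{L_p}$ for nonnegative $g,h$ and $0<p<1$ (and its $\ell^q$ analogue for $0<q\le1$), is true, but it is not a pointwise power inequality integrated up: the standard proof writes $g^p=\bigl(g(g+h)^{p-1}\bigr)^p(g+h)^{p(1-p)}$, applies H\"older with exponents $1/p$ and $1/(1-p)$ to get $\|g\|_{L_p}\le\bigl(\int g(g+h)^{p-1}\bigr)\bigl(\int(g+h)^p\bigr)^{(1-p)/p}$, does the same for $h$, and adds. The paper simply cites this reverse Minkowski inequality from Grafakos and applies it twice, once in $L_p$ over each $\Omega_j$ and once in $\ell^q$ over $j$ --- exactly the two-layer composition you converge to. So your proposal is repairable by replacing the false pointwise lemma with the genuine reverse Minkowski inequality (cited, or proved via the H\"older argument above); as written, the load-bearing step is wrong.
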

\begin{proof}
Let $0<p<1,\,\,0<q\le1$ and le $\{f_n\}_{n=0}^m$ be a finite sequence of elements of $L_{p,q}(\Omega).$ For $q=1,$ using the reverse Minkowski's inequality in $L_p$ (see \cite[p. 11-12]{Grafakos}), we obtain $$\sum_{n=-m}^m\|f_n\|_{p,1}=\sum_{j\in\mathbb{Z}}\left\|\sum_{n=-m}^m|f_n\mathbf{1}_{\Omega_j}|\right\|_p\le\left\|\sum_{n=-m}^m|f_n|\right\|_{p,1}.$$ Now assume that $0<q<1$ and set $$x_n:=\left\{\|f_n\mathbf{1}_{\Omega_j}\|_p\right\}_{j\in\mathbb{Z}}\quad\forall n=-m,\ldots,m.$$ Applying the reverse Minkowski's inequality in $\ell^q$ and $L_p$, we obtain \begin{eqnarray*}\sum_{n=-m}^m\|f_n\|_{p,q} &=& \sum_{n=-m}^m\|x_n\|_{\ell^q}\le\left\|\left\{\sum_{n=-m}^m\|f_n\mathbf{1}_{\Omega_j}\|_p\right\}_{j\in\mathbb{Z}}\right\|_{\ell^q}\\ &\le&\left\|\left\{\left\|\sum_{n=-m}^mf_n\mathbf{1}_{\Omega_j}\right\|_p\right\}_{j\in\mathbb{Z}}\right\|_{\ell^q}=\left\|\sum_{n=-m}^m|f_n|\right\|_{p,q}.\end{eqnarray*} 
\end{proof}
We next prove our first duality result.
\begin{proof}[Proof of Theorem \ref{thm:duality}]
Let us start by defining some spaces. For $\nu$ a stopping time, we define
$$L_2^\nu(\Omega):=\{f\in L_2(\Omega):\,\,\mathbb{E}_n(f)=0,\,\,\textrm{for}\,\,\nu\ge n,\, n\in \mathbb{N}\}$$
and 
$$L_2^\nu(B_{\nu}):=\{f\in L_2^\nu(\Omega):\textrm{supp}(f)\subseteq B_\nu\}.$$
We endow $L_2^\nu(B_{\nu})$ with $$\|f\|_{L_2^\nu(B_{\nu})}:=\left(\int_{B_\nu}|f|^2d\mathbb{P}\right)^{\frac 12}<\infty.$$
We will first prove that any continuous linear functional on $H_{p,q}^s(\Omega)$ is also continuous on $L_2^\nu(B_{\nu})$. 
\vskip .1cm
Let $f\in L_2^\nu(B_{\nu})\setminus\{0\}$ and consider $$a(\omega):=C\|f\|_{L_2^\nu(B_{\nu})}^{-1}\mathbb{P}(B_\nu)^{\frac 12-\frac 1p}f(\omega),\quad \omega\in \Omega.$$
Then for an appropriate choice of the constant (for a choice of the constant, use \cite[Proposition 2.6 and Theorem 2.11]{Weisz}), $a$ is an $(p,2)^s$-atom associated to $\nu\in \mathcal{T}$. Observing with Theorem \ref{thm:at1} that $$\|a\|_{H_{p,q}^s(\Omega)}\lesssim \|\mathbf{1}_{B_\nu}\|_{p,q}\mathbb{P}(B_\nu)^{-\frac 1p},$$ 
and recalling that $p\le q$, we obtain 
\begin{eqnarray*}
\|f\|_{H_{p,q}^s(\Omega)} &=& C^{-1}\|f\|_{L_2^\nu(B_{\nu})}\mathbb{P}(B_\nu)^{\frac 1p-\frac 12}\|a\|_{H_{p,q}^s(\Omega)}\\ &\lesssim& \|f\|_{L_2^\nu(B_{\nu})}\mathbb{P}(B_\nu)^{\frac 1p-\frac 12}\|\mathbf{1}_{B_\nu}\|_{p,q}\mathbb{P}(B_\nu)^{-\frac 1p}\\ &\lesssim& \|f\|_{L_2^\nu(B_{\nu})}\mathbb{P}(B_\nu)^{\frac 1p-\frac 12}.
\end{eqnarray*}
It follows that for any continuous linear functional $\kappa$ on $H_{p,q}^s(\Omega)$ with operator norm $\|\kappa\|$,
$$|\kappa(f)|\le \|\kappa\|\|f\|_{H_{p,q}^s(\Omega)}\lesssim \|\kappa\|\mathbb{P}(B_\nu)^{\frac 1p-\frac 12}\|f\|_{L_2^\nu(B_{\nu})}.$$
Hence $\kappa$ is continuous on $L_2^\nu(B_{\nu})$ with operator norm $$\|\kappa\|_{(L_2^\nu(B_{\nu}))^*}:=\sup_{\substack {f\in L_2^\nu(B_{\nu})\\ \|f\|_{L_2^\nu(B_{\nu})}\le 1}}\|\kappa(f)\|\lesssim \mathbb{P}(B_\nu)^{\frac 1p-\frac 12}\|\kappa\|.$$
As $L_2^\nu(B_{\nu})$ is a subspace of $L_2(B_{\nu})=L_2(B_{\nu},d\mathbb{P})$, it follows from the above observation and the Hahn-Banach Theorem that any continuous linear functional $\kappa$ on $H_{p,q}^s(\Omega)$ can be extended to a continuous linear functional $\kappa_\nu$ on $L_2(B_{\nu})$. As $L_2(B_{\nu})$ is auto-dual, it follows that there exists $g\in L_2(B_{\nu})$ such that $$\kappa_\nu(f)=\int_{B_\nu}fg\,d\mathbb{P},\quad\forall f\in L_2(B_{\nu}).$$
Consequently, $$\kappa(f)=\kappa_\nu(f)=\int_{B_\nu}fg\,d\mathbb{P},\quad\forall f\in L_2^\nu (B_{\nu}).$$ 
Next, as $L_2(\Omega)$ is a dense in $H_{p,q}^s(\Omega)$ (this follows from the fact that $p\le q<2$ and Theorem \ref{thm:at1}), we have that any element $\kappa$ of the dual space of $H_{p,q}^s(\Omega)$ can be represented by 
\begin{equation}\label{eq:dualL2}\kappa(f)=\int_{\Omega}fg\,d\mathbb{P},\quad\forall f\in L_2 (\Omega).\end{equation}  
We are going to prove that the function $g$ in (\ref{eq:dualL2}) is in $\mathcal{L}_{2,\phi}(\Omega)$.
\vskip .1cm
Let $\nu\in \mathcal{T}$ and let $f\in L_2^\nu (B_{\nu})$ with $\|f\|_{L_2^\nu (B_{\nu})}\le 1$. Define
$$a(\omega)=C\mathbb{P}(B_\nu)^{\frac 12-\frac 1p}\frac{(f-f^\nu)\mathbf{1}_{B_\nu}(\omega)}{\|(f-f^\nu)\mathbf{1}_{B_\nu}\|_{L_2(\Omega)}},\quad \omega\in \Omega.$$
Then for an appropriate choice of the constant, $a$ is an $(p,2)^s$-atom associated to the stopping time $\nu$ and $a\in L_2(\Omega)$. Hence $$\kappa(a)=\int_{\Omega}ag\,d\mathbb{P}=\int_{B_\nu}ag\,d\mathbb{P}.$$
Thus 
\begin{eqnarray*}
\left|\int_{B_\nu}a(g-g^\nu)\,d\mathbb{P}\right| &=& \left|\int_{B_\nu}ag\,d\mathbb{P}\right|\\ &=& |\kappa(a)|\\ &\le& \|\kappa\|\|a\|_{H_{p,q}^s(\Omega)}\\ &\lesssim& \|\kappa\|\|\mathbf{1}_{B_\nu}\|_{p,q}\mathbb{P}(B_\nu)^{-\frac 1p}.
\end{eqnarray*}
Hence
\begin{eqnarray*}
\left|\int_{B_\nu}f(g-g^\nu)\,d\mathbb{P}\right| &=& \left|\int_{B_\nu}(f-f^\nu)(g-g^\nu)\,d\mathbb{P}\right|\\ &\lesssim&  C^{-1}\|(f-f^\nu)\mathbf{1}_{B_\nu}\|_{L_2(\Omega)}\mathbb{P}(B_\nu)^{\frac 1p-\frac 12}\|\kappa\|\|\mathbf{1}_{B_\nu}\|_{p,q}\mathbb{P}(B_\nu)^{-\frac 1p}\\ &\lesssim& \mathbb{P}(B_\nu)^{-\frac 12}\|\mathbf{1}_{B_\nu}\|_{p,q}\|\kappa\|.
\end{eqnarray*}
Thus 
\begin{eqnarray*}
\left(\int_{B_\nu}|g-g^\nu|^2\,d\mathbb{P}\right)^{\frac 12} &:=& \sup_{\substack{f\in L_2^\nu(B_\nu)\\ \|f\|_{L_2^\nu(B_\nu)}\le 1}}\left|\int_{B_\nu}f(g-g^\nu)\,d\mathbb{P}\right|\\ &\lesssim& \mathbb{P}(B_\nu)^{-\frac 12}\|\mathbf{1}_{B_\nu}\|_{p,q}\|\kappa\|.
\end{eqnarray*}
This gives us 
$$\frac{1}{\phi(B_\nu)}\left(\frac{1}{\mathbb{P}(B_\nu)}\int_{B_\nu}|g-g^\nu|^2\,d\mathbb{P}\right)^{\frac 12}\lesssim \|\kappa\|,\quad\forall \nu\in\mathcal{T}.$$
Hence $g\in \mathcal{L}_{2,\phi}(\Omega)$, and the proof of the first part of the theorem is complete.
\vskip .2cm
Conversely, let $g\in\mathcal{L}_{2,\phi}(\Omega).$ Let $f\in H^s_{p,q}(\Omega).$ We know that for the stopping times $$\nu^k:=\inf\{n\in\mathbb{N}:s_{n+1}(f)>2^k\},\,k\in \mathbb{Z},$$ \begin{eqnarray}\label{eq2}\left\|\sum_{k\in\mathbb{Z}}\left(\frac{\lambda_k}{\mathbb{P}(B_{\nu^k})^{\frac{1}{p}}}\right)^{\eta}\mathbf{1}_{B_{\nu^k}}\right\|^{\frac{1}{\eta}}_{\frac{p}{\eta},\frac{q}{\eta}}\le C\|f\|_{H^s_{p,q}}.\end{eqnarray} and moreover, $$\sum_{k=l}^m\lambda_ka^k\longrightarrow f$$ in $H^s_{p,q}$ as $m\rightarrow\infty,\,\,l\rightarrow -\infty,$ where $(\lambda_k,a^k,\nu_k)\in \mathcal{A}(p,q,2)^s$.  Also since $a^k$ is $L_2$-bounded, for $f\in H^s_{p,q}(\Omega),$ $$\kappa_g(f)=\mathbb{E}[fg]=\sum_{k\ge0}\mathbb{E}[a^kg]$$ is well defined and linear.  Using this, Schwartz's inequality, and the fact that $\|s(a^k)\|_2\le\mathbb{P}(B_{\nu^k})^{\frac 12-\frac{1}{p}}$, we obtain
\begin{eqnarray*}
|\kappa_g(f)| &\le& \sum_{k\in \mathbb{Z}}\lambda_k\left|\int_\Omega a^k(g-g^{\nu^k})\mathrm{d}\mathbb{P}\right|\le\sum_{k\in \mathbb{Z}}\lambda_k\|a^k\|_2\left(\int_{B_{\nu^k}} |g-g^{\nu^k}|^2\mathrm{d}\mathbb{P}\right)^{\frac{1}{2}}\\
&\lesssim& \sum_{k\in \mathbb{Z}}\lambda_k\|s(a^k)\|_2\left(\int_{B_{\nu^k}} |g-g^{\nu^k}|^2\mathrm{d}\mathbb{P}\right)^{\frac{1}{2}}\\
&=& \sum_{k\in \mathbb{Z}}\lambda_k\frac{\|\mathbf{1}_{B_{\nu^k}}\|_{p,q}}{\mathbb{P}(B_{\nu^k})^{\frac{1}{p}}}\frac{1}{\phi(B_{\nu^k})}\left(\frac{1}{\mathbb{P}(B_{\nu^k})}\int |g-g^{\nu^k}|^2\mathrm{d}\mathbb{P}\right)^{\frac{1}{2}}.
\end{eqnarray*}
Hence using Proposition \ref{prop2}, we deduce that
\begin{eqnarray*}|\kappa_g(f)| &\lesssim& \|g\|_{\mathcal{L}_{2,\phi}}\sum_{k\in \mathbb{Z}}\left\|\frac{\lambda_k}{\mathbb{P}(B_{\nu^k})^{\frac{1}{p}}}\mathbf{1}_{B_{\nu^k}}\right\|_{p,q}\\ &\le& \|g\|_{2,\phi}\left\|\sum_{k\ge 0}\left(\frac{\lambda_k}{\mathbb{P}(B_{\nu^k})^{\frac{1}{p}}}\right)^{\eta}\mathbf{1}_{B_{\nu^k}}\right\|^{\frac{1}{\eta}}_{\frac{p}{\eta},\frac{q}{\eta}}\\ &\lesssim& \|f\|_{H^s_{p,q}}\|g\|_{2,\phi}.
\end{eqnarray*}
Thus $\kappa_g(f) = \mathbb{E}[fg]$ extends continuously on $H^s_{p,q}(\Omega)$ and the proof is complete.
\end{proof}

\section{Concluding comments}
We start this section by introducing two additional martingale Hardy-amalgam spaces. These spaces which we denote $H^S_{p,q}$ and $H^*_{p,q}$ are defined as follows
\begin{itemize}
\item[i.]$H^S_{p,q}(\Omega) = \{f\in\mathcal{M} : \|f\|_{H^S_{p,q}(\Omega)} :=\|S(f)\|_{p,q}<\infty\}$.
\item[iii.]$H^*_{p,q}(\Omega) = \{f : \|f\|_{H^*_{p,q}(\Omega)}:=\|f^*\|_{p,q}<\infty\}.$
\end{itemize}
We note that when $p=q$, the spaces $H^S_{p,q}$ and $H^*_{p,q}$ correspond respectively to the classical martingale Hardy spaces $H^S_{p}$ and $H^*_{p}$ discussed in \cite{Weisz}.
We note that the question of atomic decompositions of the spaces $H^S_p$ and $H^*_p,$ is still open. Nevertheless, embedding relations between the five classical martingale spaces are well known. Indeed we have the follows (see \cite[Theorem 2.11]{Weisz}).

\begin{proposition}\label{prop:weisz}
For any $f\in \mathcal{M}$, the following hold.
\begin{itemize}
\item[(i)] $\|f\|_{H^*_{p}}\le C_p\|f\|_{H^s_{p}},\quad \|f\|_{H^S_{p}}\le C_p\|f\|_{H^s_{p}}\qquad (0<p\le2)$
\item[(ii)] $\|f\|_{H^s_{p}}\le C_p\|f\|_{H^*_{p}},\quad \|f\|_{H^s_{p}}\le C_p\|f\|_{H^S_{p}}\qquad (2\le p<\infty)$
\item[(iii)] $\|f\|_{H^*_{p}}\le C_p\|f\|_{\mathcal{P}_{p}},\quad \|f\|_{H^S_{p}}\le C_p\|f\|_{\mathcal{Q}_{p}}\qquad (0<p<\infty)$
\item[(iv)] $\|f\|_{H^*_{p}}\le C_p\|f\|_{\mathcal{Q}_{p}},\quad \|f\|_{H^S_{p}}\le C_p\|f\|_{\mathcal{P}_{p}}\qquad (0<p<\infty)$
\item[(v)] $\|f\|_{H^s_{p}}\le C_p\|f\|_{\mathcal{P}_{p}},\quad \|f\|_{H^s_{p}}\le C_p\|f\|_{\mathcal{Q}_{p}}\qquad (0<p<\infty)$.
\end{itemize}
Moreover, if the $(\mathcal{F})_{n\ge 0}$ is regular, the above five spaces are equivalent. [ c.f. \cite[Theorem 2.11]{Weisz} for the following. ]
\end{proposition}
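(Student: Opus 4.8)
The statement is classical --- it is \cite[Theorem~2.11]{Weisz} --- so the plan is to recall its proof, organised as follows: the trivial pointwise dominations (iii); a single stopping-time argument handling all one-directional estimates for $0<p\le 2$, namely (i), (iv) and half of (v); a duality argument for $2\le p<\infty$, namely (ii) and the other half of (v); and the regular case. For (iii): if $\beta=(\beta_n)_{n\ge0}\in\Gamma$ realises $f\in\mathcal P_p$, i.e. $|f_n|\le\beta_{n-1}$, then $f^*=\sup_n|f_n|\le\beta_\infty$ pointwise, whence $\|f^*\|_p\le\|\beta_\infty\|_p$; taking the infimum over admissible $\beta$ gives $\|f\|_{H^*_p}\le\|f\|_{\mathcal P_p}$, and likewise $S_n(f)\le\beta_{n-1}$ forces $S(f)\le\beta_\infty$ and $\|f\|_{H^S_p}\le\|f\|_{\mathcal Q_p}$.

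For the range $0<p\le2$, fix $\lambda>0$ and cut at level $\lambda$ with the stopping time $\tau=\inf\{n:\,A_{n+1}>\lambda\}$, where $A_n=s_n(f)$ when the estimate starts from $H^s_p$ and $A_n=\beta_n$ when it starts from $\mathcal Q_p$ or $\mathcal P_p$. Exactly as in the identity $s(f^{\nu^k})=s_{\nu^k}(f)$ used in the proof of Theorem~\ref{thm:at1}, the stopped martingale $f^\tau$ satisfies $s(f^\tau)=s_\tau(f)\le\min(A_\infty,\lambda)$ (and, when $A=\beta$, also $S(f^\tau)\le\min(\beta_\infty,\lambda)$); in either case the $L_2$ orthogonality identity $\mathbb E[S(g)^2]=\mathbb E[s(g)^2]=\mathbb E[g_\infty^2]$ for $L_2$-bounded $g$, together with Doob's $L_2$ maximal inequality, bounds each of $\mathbb E[s(f^\tau)^2]$, $\mathbb E[S(f^\tau)^2]$, $\mathbb E[M(f^\tau)^2]$ by a constant times $\mathbb E[\min(A_\infty,\lambda)^2]$. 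Since $\{\Phi(f)>\lambda\}\subseteq\{\tau<\infty\}\cup\{\Phi(f^\tau)>\lambda\}$ for $\Phi\in\{s,\,S,\,M\}$ (because $f^\tau=f$ on $\{\tau=\infty\}$) and $\mathbb P(\tau<\infty)=\mathbb P(A_\infty>\lambda)$, Chebyshev gives $\mathbb P(\Phi(f)>\lambda)\lesssim\mathbb P(A_\infty>\lambda)+\lambda^{-2}\mathbb E[\min(A_\infty,\lambda)^2]$; integrating against $p\lambda^{p-1}\,d\lambda$ and splitting the second term at $\lambda=A_\infty$ yields $\|\Phi(f)\|_p^p\le C_p\|A_\infty\|_p^p$ for $0<p<2$ with $C_p$ depending only on $p$ (and $p=2$ is the identity itself). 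Choosing $A=s(f)$ gives (i); choosing $A=\beta$ and passing to the infimum gives $\|S(f)\|_p\le C_p\|f\|_{\mathcal Q_p}$, $\|f^*\|_p\le C_p\|f\|_{\mathcal P_p}$ and the $p\le2$ half of (v); combining the latter with the Burkholder--Davis--Gundy comparison $\|f^*\|_p\approx\|S(f)\|_p$ (valid for all $0<p<\infty$) and with (iii) then gives (iv).

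For $2\le p<\infty$ I would use duality in $L_{p/2}$: for any $g\ge0$ with $\|g\|_{(p/2)'}\le1$,
\[
\mathbb E[s(f)^2 g]=\sum_{n}\mathbb E\big[|d_nf|^2\,\mathbb E_{n-1}g\big]\le\mathbb E\big[S(f)^2\,g^*\big]\le\|S(f)^2\|_{p/2}\,\|g^*\|_{(p/2)'}\lesssim\|S(f)^2\|_{p/2},
\]
the last step by Doob's maximal inequality at the conjugate exponent $(p/2)'>1$; taking the supremum over $g$ gives $\|s(f)\|_p\le C_p\|S(f)\|_p$, and with $\|S(f)\|_p\le C_p\|f^*\|_p$ this is (ii). The $p\ge2$ half of (v) then follows from $\|s(f)\|_p\le C_p\|S(f)\|_p\le C_p\|\beta_\infty\|_p$ and $\|s(f)\|_p\le C_p\|f^*\|_p\le C_p\|\beta_\infty\|_p$, using the pointwise bounds from (iii).

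Finally, for the regular case, applying regularity to the non-negative martingale $n\mapsto\mathbb E_n|d_kf|^2$, $n\ge k$, gives $|d_kf|^2\le R\,\mathbb E_{k-1}|d_kf|^2$, hence $S_n(f)\le\sqrt R\,s_n(f)$ and $S(f)\le\sqrt R\,s(f)$ pointwise; since $s_n(f)$ is $\mathcal F_{n-1}$-measurable and non-decreasing, $\beta_{n-1}:=\sqrt R\,s_n(f)$ is an admissible control, so $\|f\|_{\mathcal Q_p}\le\sqrt R\,\|f\|_{H^s_p}$, and an analogous construction based on $\mathbb E_nf^*\le R\,\mathbb E_{n-1}f^*$ realises $f\in\mathcal P_p$; together with the inequalities above and Burkholder--Davis--Gundy this forces all five quasi-norms to be equivalent. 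I expect the main obstacles to be the reverse comparison (ii) for $p\ge2$ --- where the duality step must be set up with care about closability in $L_2$ --- and, in the regular case, controlling $\mathcal P_p$ for small $p$, where Doob's $L_p$ maximal inequality fails and one must substitute the regularity bound $\mathbb E_nf^*\le R\,\mathbb E_{n-1}f^*$ together with a Davis-type stopping argument.
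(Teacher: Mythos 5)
The paper does not actually prove this proposition: it is stated in the concluding section purely as a recollection of \cite[Theorem~2.11]{Weisz}, with the citation standing in for the proof. Your reconstruction of that classical argument is essentially the standard one (pointwise domination for (iii), a stopping-time/good-$\lambda$-free distributional estimate for the $0<p\le 2$ inequalities, conditioning plus Doob at the conjugate exponent for $2\le p<\infty$, and the regularity bound $|d_kf|^2\le R\,\mathbb{E}_{k-1}|d_kf|^2$ for the regular case), and the core computations --- the identity $s(f^\tau)=s_\tau(f)$, the $L_2$ bound $\mathbb{E}[\Phi(f^\tau)^2]\lesssim\mathbb{E}[\min(A_\infty,\lambda)^2]$, the inclusion $\{\Phi(f)>\lambda\}\subseteq\{\tau<\infty\}\cup\{\Phi(f^\tau)>\lambda\}$, and the integration split at $\lambda=A_\infty$ --- are all correct.

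Three points deserve attention. First, when $A_n=\beta_n$ your cut $\tau=\inf\{n:A_{n+1}>\lambda\}$ is not a stopping time: $\beta_{n+1}$ is only $\mathcal{F}_{n+1}$-measurable, so $\{\tau\le n\}\notin\mathcal{F}_n$ in general. You must use $\tau=\inf\{n:\beta_n>\lambda\}$ (as the paper itself does in the proof of Theorem~\ref{thm:at3}); the conclusion $S_\tau(f)\le\beta_{\tau-1}\le\lambda$ survives unchanged, so this is a fixable indexing slip rather than a fatal error --- but note the asymmetry with the $s_{n+1}(f)$ case, where the shift by one is precisely what makes $\tau$ adapted. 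Second, you invoke Burkholder--Davis--Gundy to get (ii) and (iv), whereas in Weisz's development BDG (\cite[Theorem~2.12]{Weisz}) is \emph{deduced from} this very proposition --- the paper even says so explicitly after the statement. This is not an absolute circularity (BDG has independent proofs via the Davis decomposition), but if you want a self-contained argument in the spirit of the source, observe that your own stopping-time estimate already yields $\|f^*\|_p\lesssim\|\beta_\infty\|_p$ and $\|S(f)\|_p\lesssim\|\beta_\infty\|_p$ directly for $0<p\le2$ via the $L_2$ identity $\mathbb{E}[M(g)^2]\lesssim\mathbb{E}[g_\infty^2]=\mathbb{E}[S(g)^2]$, and for $p\ge 2$ Doob plus Burkholder's inequality for $p>1$ suffices, so BDG can be avoided. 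Third, your treatment of the regular case for $\mathcal{P}_p$ is only a gesture: the candidate control $\beta_{n-1}=R\,\mathbb{E}_{n-1}[f^*]$ obtained from regularity is not pathwise non-decreasing, and for $p\le 1$ one cannot repair this with Doob's inequality; you acknowledge this yourself, but as written that half of the equivalence is not established and genuinely requires the Davis-type decomposition from \cite{Weisz}.
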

The above result can be used to prove the Burkholder-Davis-Gundy's inequality \cite[Theorem 2.12]{Weisz} which provides an equivalence between the spaces $H^S_{p}$ and $H^*_{p}$.
\vskip .2cm
We can then state our first open problem in this setting.
\vskip .1cm
{\bf Question 1:} 
Does Proposition \ref{prop:weisz} extend to martingale Hardy-Amalgam spaces introduced in this paper? 
\vskip .1cm
If one assumes that for any $j\in \mathbb{Z}$ and any $n\ge 0$, $\Omega_j\in \mathcal{F}_n$, then it is not hard to prove that the above proposition extends to our setting mutandis mutatis. Unfortunately, this hypothesis is too restrictive.
\vskip .2cm
Our second open question about these new spaces is about the duality for large exponents.
\vskip .1cm
{\bf Question 2:} Is there a characterization of the dual space of the space $H^s_{p,q}$ for $\max\{p,q\}>1$?
\vskip .1cm
The dual space of $H^s_{p}$ for $1<p<\infty$ is described in \cite[Theorem 2.26]{Weisz}. It is not clear how this can be extended to the space $H^s_{p,q}$ even under further restrictions on the family $\{\Omega_j\}_{j\in \mathbb{Z}}$.
 
\section{Declarations}
The authors declare that they have no conflict of interest regarding this work.



\end{document}